\date{} 
\newtheorem{theorem}{Theorem}
\newtheorem{definition}{Definition}
\title{Tensor Completion via Tensor QR Decomposition and $L_{2,1}$-Norm Minimization\footnote{Research supported by the National Natural Science Foundation of China under Grant (11801418).}}
\begin{document}
\maketitle

\begin{center}

{Yongming Zheng \footnote{Email: zym609629@foxmail.com (Yongming Zheng)}, An-Bao Xu \footnote{Corresponding Author: xuanbao@wzu.edu.cn (An-Bao Xu)}} \\[2mm]
\emph{College of Mathematics and Physics, Wenzhou University, Zhejiang 325035, China.}

\end{center}

\begin{abstract}
In this paper, we consider the tensor completion problem, which has many researchers in the machine learning particularly concerned. Our fast and precise method is built on extending the $L_{2,1}$-norm minimization and Qatar Riyal decomposition (LNM-QR) method for matrix completions to tensor completions, and is different from the popular tensor completion methods using the tensor singular value decomposition (t-SVD). In terms of shortening the computing time, t-SVD is replaced with the method computing an approximate t-SVD based on Qatar Riyal decomposition (CTSVD-QR), which can be used to compute the largest $r \left(r>0 \right)$ singular values (tubes) and their associated singular vectors (of tubes) iteratively. We, in addition, use the tensor $L_{2,1}$-norm instead of the tensor nuclear norm to minimize our model on account of it is easy to optimize. Then in terms of improving accuracy, ADMM, a gradient-search-based method, plays a crucial part in our method. Numerical experimental results show that our method is faster than those state-of-the-art algorithms and have excellent accuracy.\\
\end{abstract}

{\bf Keywords :} Tensor completion, approximate tensor singular value decomposition, tensor Qatar Riyal decomposition,  $L_{2,1}$-norm of tensor\\

\section{\large\bf Introduction}
A multidimensional array, also named tensor, generalizes vectors and matrices \cite{KB2009} \cite{H2012}. A large number of  researchers attach great importance to tensor completions \cite{LMWY2013} \cite{GRY2011} \cite{KS2008} \cite{LLR1995} \cite{X2020}, because it arises in all kinds of applications, such as in dimensionality reduction \cite{LLR1995} \cite{ZJ2016}, in computer vision \cite{LMWY2013} \cite{X2020} \cite{LTYL2016}, in signal processing \cite{ZEAHK2014} and data mining \cite{KS2008} \cite{SPLCLQ2009}.

Similar to the problem of matrix completion \cite{XWX2020} \cite{XX2017}, conventional method formulates the problem of tensor completion as follows: 
\begin{equation}\label{1.1}
\min_{\mathcal{X}\in \mathbb{R}^{n_{1}\times n_{2}\times n_{3} }   } \text{rank}_{\text{t}} \left ( \mathcal{X}  \right )   \quad \text{s.t.} \quad \mathcal{X}_{i,j,k}= \mathcal{M}_{i,j,k}, \quad \left ( i,j,k \right ) \in  \Omega   
\end{equation}
where $ \mathcal{X} \in \mathbb{R}^{n_{1}\times n_{2} \times n_{3}}$ is a low-tubal-rank tensor,  $ \text{rank}_{ \text{t}} \left ( \mathcal{X}  \right )$ is the tubal-rank of $ \mathcal{X}$ which is determined by t-SVD \cite{ZEAHK2014} \cite{KBHH2013} \cite{LFCLLY2019}, $ \mathcal{M} \in \mathbb{R}^{n_{1} \times n_{2} \times n_{3}}$ is an incomplete tensor, and $ \Omega$ is the set of observed locations. However, as we all know that Problem (\ref{1.1}) is N-P hard, because the tubal-rank function ${f} \left ( \mathcal{X}  \right ) = \text{rank}_{\text{t}} \left ( \mathcal{X}  \right )$ is not convex so that it is difficult to optimize. Fortunately, there is a convex surrogate function \cite{LFCLLY2019} for tubal-rank function, and Problem (\ref{1.1}) can be modified as follows:
\begin{equation}\label{1.2}
\min_{\mathcal{X}\in \mathbb{R}^{n_{1}\times n_{2}\times n_{3} }   } \left \| \mathcal{X}  \right \| _{\ast }   \quad \text{s.t.} \quad \mathcal{X}_{i,j,k}= \mathcal{M}_{i,j,k},\quad \left ( i,j,k \right )\in  \Omega   
\end{equation}
where $\left \| \mathcal{X}  \right \| _{\ast }$ is the tensor nuclear norm \cite{LFCLLY2019}. Still, if we want to optimize this model, we will need to decompose $\mathcal{X}$ at iteration step. Compared with CANDECOM/PARAFAC decomposition (CP) \cite{KB2009} \cite{SK2008} and Tucker decomposition \cite{LMWY2013}, tensor singular value thresholding \cite{LFCLLY2019} (t-SVT) based on t-SVD is one of the greatest and the most widely used methods. However, singular value decomposition (SVD) takes too many computations so that t-SVT can not be fast. In order to reduce the amount of computation and increase tensor completional speed, we have to find an efficient method which has less computational cost to replace t-SVD. 

Fortunately, inspired by \cite{LDYCJH2019} \cite{KD2011}, especially \cite{KD2011}, which recently proposed tensor product (t-product) whose details can be seen in Definition \ref{t-product}, we propose a method to decompose a thrid-order tensor, which is just like t-SVD via QR decomposition, named CTSVD-QR. Because CTSVD-QR and t-SVD are essentially based on QR decomposition and SVD respectively, besides, QR decomposition is much faster than SVD \cite{HMT2011}. Thus, our CTSVD-QR method is faster than t-SVD. At the same time, considering the extension of $L_{2,1}$-norm minimization \cite{NHCD2010} \cite{HLZ2017} of matrix, we propose tensor $L_{2,1}$-norm, defined in (\ref{L2,1-norm of tenosr}), to replace the tensor nuclear norm in Problem (\ref{1.2}). Tensor $L_{2,1}$-norm minimization problem is composed of several subproblems, whose solutions contain the t-SVT method, so that the optimizing efficiency and accuracy can be improved in this way.

Typically, investigating a fast and precise tensor completion method is a meaningful public conundrum. Happily, t-product provides a convenient bridge between tensor completion problem and matrix completion problem, which provides a valuable solution to this challenge.

In fact, our CTSVD-QR can also decompose a third-order tensor into the tensor product of three low-tubal-rank tensors analogous to t-SVD, and will faster than t-SVD. Additionally, under the framework of tensor tri-factorization, our tensor $L_{2,1}$-norm is easier than the nuclear norm to optimize. Thus, the purpose of this paper is to propose a fast and precise tensor completion method combining with tensor QR decomposition and tensor $L_{2,1}$-norm to work out the fore-mentioned public conundrum.

The main contributions of our paper are:
\begin{itemize}
\item A new method, which can decompose a third-order tensor iteratively into three low-tubal-rank tensors approximate t-SVD via QR decomposition, named CTSVD-QR, is proposed and is faster than t-SVD.
\item The tensor $L_{2,1}$-norm is proposed and is proven that the tensor nuclear norm is the lower bound on it. So it can be a substitute for tensor nuclear norm. Besides, we can also get the solution of tensor $L_{2,1}$-norm minimization problem.
\item Combining with tensor QR decomposition and tensor $L_{2,1}$-norm, a new tensor completion method, named TLNM-TQR, is proposed. The combination of these two methods makes optimization faster and easier. Numerical experimental results show that this method is much faster than some excellent algorithms in recent years and has an outstanding precision.
\end{itemize}

\section{\large\bf Notations and preliminaries}
\subsection{Notations}
In this paper, we denote vectors by lowercase boldface letters, $\mathbf{a}$, and matrices by uppercase boldface letters, $\mathbf{A}$, respectively. Tensors are represented in boldface Euler script letters. For instance, $\mathcal{A} \in \mathbb{C} ^{n_{1}\times n_{2}\times n_{3}   } $ is used to denote a third-order tensor, and its $\left ( i,j,k \right ) $th entry is represented as $\mathcal{A} _{i,j,k} $ or $a _{i,j,k} $. The $i$th horizontal, lateral and frontal slice (see definitions in \cite{KB2009}) of third-order tensor $\mathcal{A}$ is denoted by the Matlab notation $\mathcal{A}\left ( i,:,: \right ) $, $\mathcal{A}\left ( :,i,: \right ) $ and $\mathcal{A}\left ( :,:,i \right ) $ respectively. In general, $\mathcal{A}^{\left ( i \right ) } $ denotes the frontal slice $\mathcal{A}\left ( :,:,i \right )$ compactly, and $\mathcal{A}\left ( i,j,: \right ) $ denotes the tube of $i,j$ in the third-order tensor dimension.

For any $\mathcal{A}, \mathcal{B} \in \mathbb{C} ^{n_{1} \times n_{2} \times n_{3}   } $, their inner product is denoted as $\left \langle \mathcal{A},\mathcal{B}   \right \rangle =  {\textstyle \sum_{i=1}^{n_{3}}} \left \langle \mathcal{A}^{ \left ( i \right ) },\mathcal{B}^{\left ( i \right ) }   \right \rangle $, the complex conjugate of $ \mathcal{A}$ is denoted as conj$ \left ( \mathcal{A}  \right ) $, and the conjugate transpose of $ \mathcal{A}$ is denoted as $ \mathcal{A}^{\ast}$. Additionally, the Frobenius norm of tensor is denoted as $\left \| \mathcal{A}  \right \| _{F } = \sqrt{ {\textstyle \sum_{ijk}}\left | a_{ijk}^{2}  \right  |  }=\sqrt{\left \langle \mathcal{A},\mathcal{A}   \right \rangle} $. 


\subsection{Discrete Fourier Transformation}
The discrete fourier transformation (DFT) is the importance bridge between tensor product and matrix product. DFT gives a new tensor product that likes matrix product and makes it cost less calculation. We denote $\hat{\mathbf{v}} $ by 
\begin{center}
	$\hat{\mathbf{v}} = \mathbf{F} _{n} \mathbf{v}\in \mathbb{C} ^{n} $,
\end{center}
where $\mathbf{F} _{n}$ is the DFT matrix denoted as 

\begin{center}
	$\mathbf{F} _{n} = \begin{bmatrix}
	1& 1 & 1& \cdots &1\\
	1& \omega  & \omega ^{2}  & \cdots &\omega ^{n-1} \\
	\vdots & \vdots  & \vdots  & \ddots &\vdots \\
	1& \omega ^{n-1} & \omega ^{2\left ( n-1 \right )} &\cdots&\omega ^{\left ( n-1 \right )\left ( n-1 \right )}
	\end{bmatrix} \in \mathbb{C} ^{n\times n} $,
\end{center}
where $\omega =e^{-\frac{2\pi i}{n} }$ and $i$ is the imaginary unit. So we can learn that $\frac{\mathbf{F} _{n}}{\sqrt{n}}$ is orthogonal, i.e.,
\begin{equation}\label{2.1}
\mathbf{F}_{n}^{\ast}\mathbf{F}_{n}=\mathbf{F}_{n}\mathbf{F}_{n}^{\ast}=n\mathbf{I}_{n}.
\end{equation}
Thus $\mathbf{F}_{n}^{-1}=\frac{\mathbf{F} _{n}^{\ast}}{\sqrt{n}}$. 

Then, under the Matlab command fft, we can obtain $\hat{\mathcal{A}} \in \mathbb{C} ^{n_{1}\times n_{2}\times n_{3}   } $, the result of DFT on $\mathcal{A}$ along the 3-rd dimension for any $\mathcal{A} \in \mathbb{C} ^{n_{1}\times n_{2}\times n_{3}   } $, i.e.,
\begin{center}
	$\hat{\mathcal{A} } =\text{fft}\left ( \mathcal{A},\left [  \right ],3   \right ) $,
\end{center}
which is using the DFT on all the tubes of $\mathcal{A}$.

At the same time, by using the command inverse fft, we have
\begin{center}
	$\mathcal{A} =\text{ifft}\left ( \hat{\mathcal{A}},\left [  \right ],3   \right ) $.
\end{center}
Besides, $\bar{\mathbf{A} } \in \mathbb{C}^{n_{1}n_{3}\times n_{2}n_{3}   } $ is the block diagonal matrix whose $i$-th diagonal block is the $i$-th frontal slice $\hat{\mathbf{A}}^{\left ( i \right ) }$ of $\hat{\mathcal{A}}$, i.e.,
\begin{center}
	$\bar{\mathbf{A} } = \text{bdiag}\left(\hat{\mathcal{A}}\right) = \begin{bmatrix}
	\hat{\mathbf{A}}^{\left ( 1 \right ) }&  &   & \\
	& \hat{\mathbf{A}}^{\left ( 2 \right ) } &  & \\
	&  & \ddots  & \\
	&  &  &\hat{\mathbf{A}}^{\left ( n_{3} \right ) }
	\end{bmatrix}$.
\end{center}
where bidiag$\left ( \cdot  \right ) $ denotes the operator that maps the tensor $\hat{\mathcal{A}}$ to the block diagonal matrix $\bar{\mathbf{A}}$. Moreover, the block circulant matrix $\text{bcirc}\left(\mathcal{A}\right)\in \mathbb{C} ^{n_{1}n_{3}\times n_{2}n_{3}   } $ of $\mathcal{A}$ is given by
\begin{center}
	$\text{bcirc}\left (\mathcal{A}\right ) = \begin{bmatrix}
	\mathbf{A}^{\left ( 1 \right )}   &  \mathbf{A}^{\left ( n_{3} \right )} & \cdots  &  \mathbf{A}^{\left ( 2 \right )}\\
	\mathbf{A}^{\left ( 2 \right )} &  \mathbf{A}^{\left ( 1 \right )} & \cdots  &  \mathbf{A}^{\left ( 3 \right )}\\
	\vdots  & \ddots  & \ddots & \vdots\\
	\mathbf{A}^{\left ( n_{3} \right )} &  \mathbf{A}^{\left ( n_{3}-1 \right )} & \cdots & \mathbf{A}^{\left ( 1 \right )}
	\end{bmatrix}$	.
\end{center}
Then by using DFT, the block circulant matrix can be block diagonalized, i.e.,
\begin{equation}\label{2.2}
	\left ( \mathbf{F} _{{n}_{3}}\otimes \mathbf{I}_{{n}_{1}}  \right ) \cdot \text{bcirc}\left ( \mathcal{A}  \right )\cdot \left ( \mathbf{F} _{{n}_{3}}^{-1}\otimes \mathbf{I}_{{n}_{2}} \right )  =\bar{\mathbf{A}  } ,
\end{equation}
where $\otimes$ denotes the Kronecker product and $\frac{ \mathbf{F} _{{n}_{3}}^{-1}\otimes \mathbf{I}_{{n}_{2}}}{\sqrt{n_{3}}}$ is orthogonal. From \cite{LFCLLY2019}, $\hat{\mathbf{A}  }^{\left ( i \right ) }$ satisfies following property:
\begin{equation}\label{conj_property}
\begin{cases}
\hat{\mathbf{A}  }^{\left ( 1 \right ) }\in \mathbb{R}^{n_{1}\times n_{2}}  , & \\
\text{ conj }\left (\hat{\mathbf{A}  }^{\left ( i \right ) }  \right )=\hat{\mathbf{A}  }^{\left ( n_{3}-i+2 \right ) },  &i=2,...,\left[\frac{n_{3}+1}{2} \right]. 
\end{cases}
\end{equation}

Besides, based on (\ref{2.1}), we have the following imporitanct properties:
\begin{equation}\label{2.3}
\left \| \mathcal{A}  \right \| _{F } = \frac{1}{\sqrt{n_{3}} } \left \|\bar{\mathbf{A}}  \right \| _{F} ,
\end{equation}
\begin{equation}\label{2.4}
\left \langle \mathcal{A},\mathcal{B}   \right \rangle =\frac{1}{n_{3}} \left \langle \bar{\mathbf {A}},\bar{\mathbf {B}}  \right \rangle .
\end{equation}

\subsection{T-product, T-SVD and T-QR}
\begin{definition}\label{t-product}
\textbf{(T-product)\cite{KD2011}} Let $\mathcal{A}\in \mathbb{R}^{n_{1}\times n_{2}\times n_{3}}$ and $\mathcal{B}\in \mathbb{R}^{n_{2}\times l\times n_{3}}$. Then the t-product $\mathcal{A}\ast\mathcal{B}$ is the following tensor of size $n_{1}\times l\times n_{3}$:
\begin{equation}
\mathcal{A}\ast\mathcal{B}=\text{fold}\left(\text{bcirc}\left(\mathcal{A}\right)\cdot\text{unfold}\left(\mathcal{B}\right)\right).
\end{equation}
where $\text{unfold}\left( \cdot \right)$ and $\text{fold}\left( \cdot \right)$ are the unfold operator map and fold operator map of $\mathcal{A}$ respectively, i.e.,
\begin{center}
	$\text{unfold}\left(\mathcal{A}\right)=\begin{bmatrix}
	\mathbf{A}^{\left ( 1 \right ) }  \\
	\mathbf{A}^{\left ( 2 \right ) } \\
	\vdots  \\
	\mathbf{A}^{\left ( n_{3} \right ) } 
	\end{bmatrix}\in \mathbb{R} ^{n_{1}n_{3}\times n_{2}} ,\quad \text{fold}\left(\text{unfold}\left(\mathcal{A}\right)\right)=\mathcal{A}$
\end{center}
\end{definition}

From the above description and some details from \cite{KD2011}, we can know that the t-product is just matrix multiplication in the Fourier domain and replaces the multiplication between elements with circular convolution. Additionally, $\mathcal{C}=\mathcal{A}\ast\mathcal{B}$ is equivalent to $\bar{\mathcal{C}}=\bar{\mathcal{A}}\ast\bar{\mathcal{B}}$ due to \cite{LFCLLY2019}. Therefore, there are many similar properties between tensor multiplication and matrix multiplication, which provides us great confidence to explore properties of tensor by analogy with some properties of matrix.

\begin{definition}
\textbf{(Conjugate transpose)\cite{KD2011}} The conjugate transpose of a tensor $\mathcal{A}\in \mathbb{R}^{n_{1}\times n_{2}\times n_{3}}$ is the tensor $\mathcal{A}^{\ast}\in \mathbb{R}^{n_{2}\times n_{1}\times n_{3}}$ obtained by conjugate transposing each of the frontal slices and then reversing the order of transposed frontal slices 2 through $n_{3}$.
\end{definition}

\begin{definition}
	\textbf{(Zero tensor)}
	The zero tensor $\mathcal{O}\in \mathbb{R}^{n_{1}\times n_{2}\times n_{3}}$ is the tensor with all the elements equal to zero.
\end{definition}

\begin{definition}
\textbf{(Identity tensor)\cite{KD2011}} The identity tensor $\mathcal{I}\in \mathbb{R}^{n\times n\times n_{3}}$ is the tensor with the $n\times n$ identity matrix $\mathbf{I}_{n}$ as its first frontal slice and all other frontal slices being $\mathbf{O}_{n}$. 
\end{definition}

\begin{definition}
\textbf{(Orthogonal tensor)\cite{KD2011}} A tensor $\mathcal{Q}\in \mathbb{R}^{n\times n\times n_{3}}$ is orthogonal if it satisfies $\mathcal{Q}^{\ast}\ast\mathcal{Q}=\mathcal{Q}\ast\mathcal{Q}^{\ast}=\mathcal{I}$.
\end{definition}
Furthermore, the notion of partial orthogonality in \cite{KD2011} is defined and is similar to that a tall, thin matrix has orthogonal columns. Let $\mathcal{Q}\in \mathbb{R}^{n\times q\times n_{3}}$ is \textbf{partially orthogonal}, then $\mathcal{Q}^{\ast}\ast\mathcal{Q}$ can be defined and equivalent to $\mathcal{I}\in \mathbb{R}^{q\times q\times n_{3}}$. We can also obtain $\left \| \mathcal{Q}\ast \mathcal{A}   \right \| _{F} = \left \| \mathcal{A}  \right \| _{F} $ according to the properties of orthogonal tensors\cite{KD2011}. 

\begin{definition}
\textbf{(F-diagonal tensor)\cite{KD2011}} A tensor is called f-diagonal if each of its frontal slices is a diagonal matrix. 
\end{definition}

\begin{theorem}
\textbf{(T-SVD)\cite{LFCLLY2019}} Let $\mathcal{A}\in \mathbb{R}^{n_{1}\times n_{2}\times n_{3}}$. Then it can be factored as
\begin{equation}
\mathcal{A}=\mathcal{U}\ast\mathcal{S}\ast\mathcal{V}^{\ast}
\end{equation}
where $\mathcal{U}\in \mathbb{R}^{n_{1}\times n_{1}\times n_{3}}$ and $\mathcal{V}\in \mathbb{R}^{n_{2}\times n_{2}\times n_{3}}$ are orthogonal, and $\mathcal{S}\in \mathbb{R}^{n_{1}\times n_{2}\times n_{3}}$ is a f-diagonal tensor.
\end{theorem}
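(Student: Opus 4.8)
The plan is to prove the factorization by passing to the Fourier domain, where the t-product becomes an ordinary block-diagonal matrix product, performing a slice-wise matrix SVD there, and then transporting the result back by the inverse DFT. I would start from the identification recalled after Definition~\ref{t-product}: a t-product $\mathcal{C}=\mathcal{A}\ast\mathcal{B}$ corresponds, under the DFT along the third mode, to the block-diagonal matrix product $\bar{\mathbf{C}}=\bar{\mathbf{A}}\,\bar{\mathbf{B}}$, where each diagonal block is a frontal slice $\hat{\mathbf{A}}^{(i)}$ of $\hat{\mathcal{A}}$. It therefore suffices to factor each complex block separately and then reassemble.

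First I would compute, for every $i=1,\dots,n_{3}$, an ordinary matrix SVD $\hat{\mathbf{A}}^{(i)}=\mathbf{U}^{(i)}\mathbf{S}^{(i)}(\mathbf{V}^{(i)})^{\ast}$ with $\mathbf{U}^{(i)},\mathbf{V}^{(i)}$ unitary and $\mathbf{S}^{(i)}$ real nonnegative diagonal. Stacking these as the frontal slices of tensors $\hat{\mathcal{U}},\hat{\mathcal{S}},\hat{\mathcal{V}}$ and applying the inverse DFT along the third mode yields candidates $\mathcal{U},\mathcal{S},\mathcal{V}$. Then $\mathcal{A}=\mathcal{U}\ast\mathcal{S}\ast\mathcal{V}^{\ast}$ follows at once, since in the Fourier domain both sides share the block-diagonal representation $\bar{\mathbf{A}}=\bar{\mathbf{U}}\bar{\mathbf{S}}\bar{\mathbf{V}}^{\ast}$. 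Orthogonality of $\mathcal{U}$ and $\mathcal{V}$ follows because $\bar{\mathbf{U}},\bar{\mathbf{V}}$ are block-diagonal unitary matrices, so $\bar{\mathbf{U}}^{\ast}\bar{\mathbf{U}}$ is the identity block-diagonal matrix, which is precisely the Fourier representation of $\mathcal{I}$; translating back gives $\mathcal{U}^{\ast}\ast\mathcal{U}=\mathcal{I}$, and similarly for $\mathcal{V}$. That $\mathcal{S}$ is f-diagonal is clear because each $\hat{\mathbf{S}}^{(i)}$ is diagonal and the inverse DFT acts tube-wise (linearly along the third mode), and any linear combination of diagonal matrices is again diagonal.

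The main obstacle is to guarantee that $\mathcal{U},\mathcal{S},\mathcal{V}$ are \emph{real} tensors, as the statement asserts, rather than merely complex. This is where I would invoke the conjugate-symmetry property~(\ref{conj_property}): $\hat{\mathbf{A}}^{(1)}$ is real and $\mathrm{conj}(\hat{\mathbf{A}}^{(i)})=\hat{\mathbf{A}}^{(n_{3}-i+2)}$. For the real block $\hat{\mathbf{A}}^{(1)}$ (and, when $n_{3}$ is even, the middle block) I would simply take a real SVD. For each conjugate pair of slices, I would choose a genuine SVD of $\hat{\mathbf{A}}^{(i)}$ and then \emph{define} the factors of $\hat{\mathbf{A}}^{(n_{3}-i+2)}$ to be the entrywise conjugates $\mathrm{conj}(\mathbf{U}^{(i)})$, $\mathbf{S}^{(i)}$, $\mathrm{conj}(\mathbf{V}^{(i)})$; this remains a valid SVD of the conjugate matrix and forces $\hat{\mathcal{U}},\hat{\mathcal{S}},\hat{\mathcal{V}}$ to obey the same symmetry~(\ref{conj_property}). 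Since a tensor satisfies that symmetry in the Fourier domain if and only if it is real in the spatial domain, the inverse DFT then returns real $\mathcal{U},\mathcal{S},\mathcal{V}$, completing the proof. The subtle point worth spelling out carefully is exactly this consistent selection of singular-vector bases across conjugate pairs, because the SVD is unique only up to unitary freedom within repeated singular values, and a careless choice would break the reality of the reconstructed factors.
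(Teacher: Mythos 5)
Your proof is correct and takes essentially the same route as the paper: the paper itself only cites this theorem from \cite{LFCLLY2019}, but both that reference and the paper's own proof of the analogous Theorem \ref{CTSVD-QR} proceed exactly as you do --- slice-wise factorization of $\hat{\mathcal{A}}$ in the Fourier domain, use of the conjugate-symmetry property (\ref{conj_property}) to keep the reconstructed factors real, and an inverse DFT to reassemble the tensor factors. Your careful point about defining the factors of $\hat{\mathbf{A}}^{(n_{3}-i+2)}$ as entrywise conjugates of those chosen for $\hat{\mathbf{A}}^{(i)}$, rather than computing independent decompositions, is precisely the device the paper uses in its CTSVD-QR proof and in Algorithm \ref{algorithm1}.
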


\begin{figure}[H]
	\centering
	\noindent\makebox[\textwidth][c] {
		\includegraphics[scale=0.4]{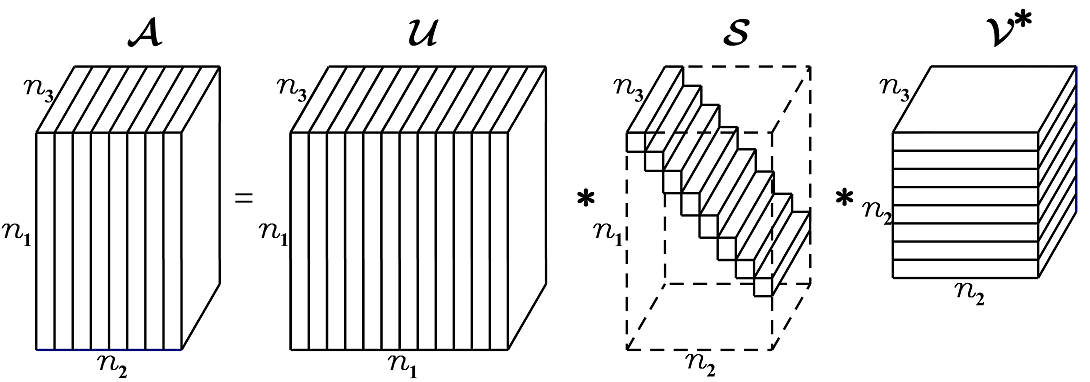} }
	\caption{The illustration of t-SVD of a $n_{1}\times n_{2}\times n_{3}$ tensor.}
	\label{t-SVD}
\end{figure}

The t-SVD is the framework of tensor tri-factorization widely used in tensor completion, as depicted in Figure \ref{t-SVD}. Its construction is similar to the SVD of matrix. T-SVD, thus, can be formalized as the sum of outer tensor product, i.e.,
\begin{center}
	$\mathcal{A}= {\textstyle \sum_{i=1}^{\min \left \{  n_{1},n_{2} \right \} }} \mathcal{U}\left (:,i,:  \right ) \ast\mathcal{S}\left (i,i,:  \right ) \ast\mathcal{V}\left (:,i,:  \right ) ^{\ast}$
\end{center}

\begin{theorem}\label{T-QR}
\textbf{(T-QR)\cite{KBHH2013}} Let $\mathcal{A}\in \mathbb{R}^{n_{1}\times n_{2}\times n_{3}}$. Then it can be factored as
\begin{equation}
\mathcal{A}=\mathcal{Q}\ast\mathcal{R}
\end{equation}
where $\mathcal{Q}\in \mathbb{R}^{n_{1}\times n_{1}\times n_{3}}$ is orthogonal, and $\mathcal{R}\in \mathbb{R}^{n_{1}\times n_{2}\times n_{3}}$ is analogous to the upper triangular matrix, as depicted in Figure \ref{t-QR}.
\end{theorem}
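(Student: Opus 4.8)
The plan is to reduce the tensor QR decomposition to a family of ordinary matrix QR decompositions in the Fourier domain, mirroring exactly how the t-SVD is established. First I would apply the DFT along the third dimension, passing from $\mathcal{A}$ to $\hat{\mathcal{A}}=\text{fft}\left(\mathcal{A},\left[\right],3\right)$, whose frontal slices $\hat{\mathbf{A}}^{\left(i\right)}\in\mathbb{C}^{n_{1}\times n_{2}}$ for $i=1,\ldots,n_{3}$ are the diagonal blocks of $\bar{\mathbf{A}}$. By the block-diagonalization identity (\ref{2.2}) and the fact (noted after Definition \ref{t-product}) that a t-product $\mathcal{C}=\mathcal{A}\ast\mathcal{B}$ corresponds to block-by-block matrix multiplication $\bar{\mathbf{C}}=\bar{\mathbf{A}}\bar{\mathbf{B}}$, it suffices to produce block-diagonal factors $\bar{\mathbf{Q}}$ and $\bar{\mathbf{R}}$ with $\bar{\mathbf{A}}=\bar{\mathbf{Q}}\bar{\mathbf{R}}$ of the required structure.

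Next, for each index $i$ I would invoke the complex matrix QR factorization $\hat{\mathbf{A}}^{\left(i\right)}=\hat{\mathbf{Q}}^{\left(i\right)}\hat{\mathbf{R}}^{\left(i\right)}$, where $\hat{\mathbf{Q}}^{\left(i\right)}\in\mathbb{C}^{n_{1}\times n_{1}}$ is unitary and $\hat{\mathbf{R}}^{\left(i\right)}\in\mathbb{C}^{n_{1}\times n_{2}}$ is upper triangular. Collecting these slices defines Fourier-domain tensors $\hat{\mathcal{Q}}$ and $\hat{\mathcal{R}}$, and I would set $\mathcal{Q}=\text{ifft}\left(\hat{\mathcal{Q}},\left[\right],3\right)$ and $\mathcal{R}=\text{ifft}\left(\hat{\mathcal{R}},\left[\right],3\right)$. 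The factorization $\mathcal{A}=\mathcal{Q}\ast\mathcal{R}$ then follows because the slicewise identities $\hat{\mathbf{A}}^{\left(i\right)}=\hat{\mathbf{Q}}^{\left(i\right)}\hat{\mathbf{R}}^{\left(i\right)}$ assemble into $\bar{\mathbf{A}}=\bar{\mathbf{Q}}\bar{\mathbf{R}}$; orthogonality of $\mathcal{Q}$ follows because the unitarity $\left(\hat{\mathbf{Q}}^{\left(i\right)}\right)^{\ast}\hat{\mathbf{Q}}^{\left(i\right)}=\mathbf{I}_{n_{1}}$ of each slice translates into $\mathcal{Q}^{\ast}\ast\mathcal{Q}=\mathcal{I}$ and, symmetrically, $\mathcal{Q}\ast\mathcal{Q}^{\ast}=\mathcal{I}$, using that the tensor conjugate transpose acts slicewise as the conjugate transpose in the Fourier domain. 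The assertion that $\mathcal{R}$ is analogous to an upper triangular matrix is precisely the statement that every frontal slice of $\hat{\mathcal{R}}$ is upper triangular.

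The main obstacle is guaranteeing that $\mathcal{Q}$ and $\mathcal{R}$ come out real-valued, since $\mathcal{A}$ is real while each matrix QR in the Fourier domain generically produces complex factors. The resolution is to exploit the conjugate-symmetry property (\ref{conj_property}): the slice $\hat{\mathbf{A}}^{\left(1\right)}$ is real, and $\text{conj}\left(\hat{\mathbf{A}}^{\left(i\right)}\right)=\hat{\mathbf{A}}^{\left(n_{3}-i+2\right)}$ for $i=2,\ldots,\left[\frac{n_{3}+1}{2}\right]$. I would therefore compute QR factorizations only for $i=1,\ldots,\left[\frac{n_{3}+1}{2}\right]$ — taking a genuinely real QR for the self-conjugate slice(s) — and define the remaining factors by conjugation, $\hat{\mathbf{Q}}^{\left(n_{3}-i+2\right)}=\text{conj}\left(\hat{\mathbf{Q}}^{\left(i\right)}\right)$ and $\hat{\mathbf{R}}^{\left(n_{3}-i+2\right)}=\text{conj}\left(\hat{\mathbf{R}}^{\left(i\right)}\right)$. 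Since conjugating a QR factorization of $\hat{\mathbf{A}}^{\left(i\right)}$ is itself a valid QR factorization of its conjugate, this choice is consistent and forces $\hat{\mathcal{Q}}$ and $\hat{\mathcal{R}}$ to satisfy the same conjugate symmetry as $\hat{\mathcal{A}}$, so that their inverse DFTs are real. As ordinary matrix QR factorizations always exist, the construction applies to arbitrary $\mathcal{A}$ and establishes the theorem.
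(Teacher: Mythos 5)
Your proposal is correct and is essentially the same argument the paper relies on: the paper does not prove Theorem \ref{T-QR} itself (it quotes it from \cite{KBHH2013}), but the proof it gives for the closely analogous Theorem \ref{CTSVD-QR} uses precisely your strategy --- block-diagonalize via the DFT as in (\ref{2.2}), factor the Fourier-domain frontal slices for $i=1,\dots,\left[\frac{n_{3}+1}{2}\right]$, and define the remaining slices by conjugation using (\ref{conj_property}) so that the inverse DFT returns real factors. Your extra care with the self-conjugate middle slice when $n_{3}$ is even (taking a genuinely real QR there) is a detail the paper glosses over, but it does not alter the approach.
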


\begin{figure}[H]
	\centering
	\noindent\makebox[\textwidth][c] {
		\includegraphics[scale=0.4]{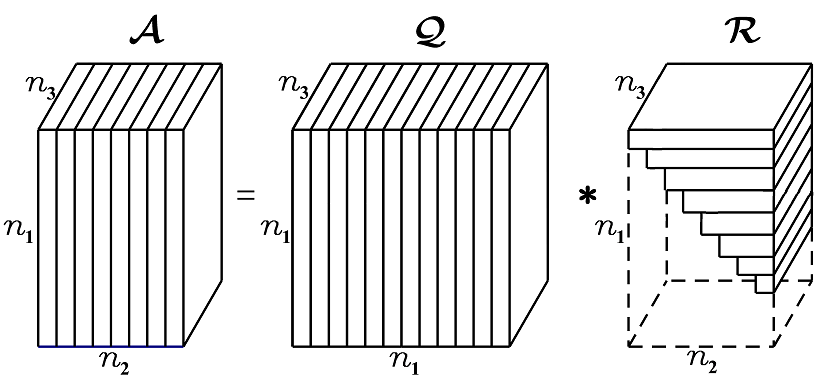} }
	\caption{The illustration of t-QR of a $n_{1}\times n_{2}\times n_{3}$ tensor.}
	\label{t-QR}
\end{figure}

\section{\large\bf Our method of tensor completion}

\subsection{A new method for third-order tensor decomposition}
Let $\mathbf{A}\in \mathbb{R}^{m\times n}$ be a real matrix; then, CSVD-QR \cite{LDYCJH2019} of $\mathbf{A}$ is as follows:
\begin{equation}
\mathbf{A} = \mathbf{L} \mathbf{D} \mathbf{R},
\end{equation}
where $\mathbf{L}\in \mathbb{R}^{m\times r}$, $\mathbf{R}\in \mathbb{R}^{r\times n}$ is the orthogonal matrix and $\mathbf{D}\in \mathbb{R}^{r\times r}$ is the lower triangular matrix whose $\mathbf{D}_{i,i}$ is the $i$th singular value of $\mathbf{A}$. 

Specific details of CSVD-QR can refer to the \cite{LDYCJH2019}
. Here is a brief description of the process of CSVD-QR:

First, let $\mathbf{L}_{k}=\text{eye}\left(m,r\right)$, $\mathbf{D}_{k}=\text{eye}\left(r,r\right)$ and $\mathbf{R}_{k}=\text{eye}\left(r,n\right)$, where $k$ is the $k$-th iteration.

Then, $\mathbf{L}_{k+1}$ is given by
\begin{equation}
\left[\mathbf{L}_{k+1},\sim \right]=\text{qr}\left(\mathbf{A}\mathbf{R}_{k}^{\ast}\right),
\end{equation}
where $\text{qr}\left(\cdot \right)$ is the function of economic QR decomposition in Matlab. $\mathbf{R}_{k+1}$ is given by
\begin{equation}
\left[\mathbf{R}_{k+1},\mathbf{T}\right]=\text{qr}\left(\mathbf{A}^{\ast}\mathbf{L}_{k+1}\right).
\end{equation}

Finally, $\mathbf{D}_{k+1}$ is updated by
\begin{equation}
\mathbf{D}_{k+1}=\mathbf{T}^{\ast}.
\end{equation}

Because of the simplicity and immediacy of CTSVD-QR, we are going to extending this decomposition to tensor decomposition. As for the function $\text{qr}\left(\cdot \right)$, we will use $\text{T-QR}\left(\cdot \right)$ in Theorem \ref{T-QR} for the subsequent optimization problem.

Suppose that $\mathcal{A}\in \mathbb{R}^{n_{1}\times n_{2}\times n_{3}}$ is a given real tensor. Then we are going to decompose $\mathcal{A}$ into $\mathcal{L}\in \mathbb{R}^{n_{1}\times r\times n_{3}}$, $\mathcal{D}\in \mathbb{R}^{r\times r\times n_{3}}$ and $\mathcal{R}\in \mathbb{R}^{r\times n_{2}\times n_{3}}$ via QR decomposition which just like t-SVD. By using such method, the largest $ r\in \left ( 0,\min \left \{ n_{1},n_{2} \right \}  \right ] $ singular values (tubes) and their associated singular vectors (of tubes) can be obtained quickly and directly. 
\begin{theorem}\label{CTSVD-QR}
\textbf{(CTSVD-QR).} For $\mathcal{A}\in \mathbb{R}^{n_{1}\times n_{2}\times n_{3}}$, it can be tri-factorized as 
\begin{equation}
\mathcal{A}=\mathcal{L}\ast \mathcal{D}\ast\mathcal{R},
\end{equation}
where $\mathcal{L}\in \mathbb{R}^{n_{1}\times r\times n_{3}}$,  $\mathcal{R}\in \mathbb{R}^{r\times n_{2}\times n_{3}}$ are orthogonal, and $\mathcal{D}\in \mathbb{R}^{r\times r\times n_{3}}$ is analogous to the $\mathcal{R}^{\ast}$ of Theorem \ref{T-QR}.
\end{theorem}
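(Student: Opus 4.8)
The plan is to pass to the Fourier domain, where the t-product collapses into a slice-wise matrix product, and then to invoke the matrix CSVD-QR of \cite{LDYCJH2019} on each frontal slice. Recall from Definition \ref{t-product} and the block-diagonalization (\ref{2.2}) that $\mathcal{C}=\mathcal{A}\ast\mathcal{B}$ holds if and only if $\bar{\mathbf{C}}=\bar{\mathbf{A}}\,\bar{\mathbf{B}}$, equivalently $\hat{\mathbf{C}}^{\left(i\right)}=\hat{\mathbf{A}}^{\left(i\right)}\hat{\mathbf{B}}^{\left(i\right)}$ for every $i=1,\dots,n_{3}$. Hence establishing $\mathcal{A}=\mathcal{L}\ast\mathcal{D}\ast\mathcal{R}$ is equivalent to producing, for each $i$, a factorization $\hat{\mathbf{A}}^{\left(i\right)}=\hat{\mathbf{L}}^{\left(i\right)}\hat{\mathbf{D}}^{\left(i\right)}\hat{\mathbf{R}}^{\left(i\right)}$ in which $\hat{\mathbf{L}}^{\left(i\right)}$ has orthonormal columns, $\hat{\mathbf{R}}^{\left(i\right)}$ has orthonormal rows, and $\hat{\mathbf{D}}^{\left(i\right)}$ is lower triangular.

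First I would apply the matrix CSVD-QR to each complex slice $\hat{\mathbf{A}}^{\left(i\right)}$, obtaining a partially orthogonal $\hat{\mathbf{L}}^{\left(i\right)}$, an $\hat{\mathbf{R}}^{\left(i\right)}$ with orthonormal rows, and a lower-triangular $\hat{\mathbf{D}}^{\left(i\right)}$ carrying the singular values of $\hat{\mathbf{A}}^{\left(i\right)}$ on its diagonal. Stacking these slices into $\hat{\mathcal{L}},\hat{\mathcal{D}},\hat{\mathcal{R}}$ and transforming back along the third mode with the inverse DFT defines the candidate tensors $\mathcal{L},\mathcal{D},\mathcal{R}$, which satisfy $\mathcal{A}=\mathcal{L}\ast\mathcal{D}\ast\mathcal{R}$ by the equivalence above.

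It then remains to verify the structural claims. For orthogonality I would observe that $\hat{\mathbf{L}}^{\left(i\right)\ast}\hat{\mathbf{L}}^{\left(i\right)}=\mathbf{I}$ for every $i$ is exactly the slice-wise form of $\mathcal{L}^{\ast}\ast\mathcal{L}=\mathcal{I}$, so $\mathcal{L}$ is partially orthogonal; the same argument applied to the rows gives $\mathcal{R}\ast\mathcal{R}^{\ast}=\mathcal{I}$. Since each $\hat{\mathbf{D}}^{\left(i\right)}$ is lower triangular, every frontal slice of $\hat{\mathcal{D}}$ is lower triangular, which is precisely the analogue of the $\mathcal{R}^{\ast}$ factor of the T-QR in Theorem \ref{T-QR} (whose Fourier slices are upper triangular).

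The main obstacle is to guarantee that $\mathcal{L},\mathcal{D},\mathcal{R}$ come out real rather than merely complex. By (\ref{conj_property}) a Fourier-domain tensor has a real inverse transform exactly when its slices obey $\text{conj}\left(\hat{\mathbf{X}}^{\left(i\right)}\right)=\hat{\mathbf{X}}^{\left(n_{3}-i+2\right)}$; although $\hat{\mathbf{A}}^{\left(i\right)}$ enjoys this symmetry, the CSVD-QR output need not, unless the underlying QR steps are forced to respect complex conjugation. I would therefore run the decomposition only for $i=1,\dots,\left[\frac{n_{3}+1}{2}\right]$ and define the remaining slices by conjugation, then check that the factorization, the orthonormality and the lower-triangular shape are all preserved under conjugation; fixing the phase convention of each QR step so that conjugate inputs yield exactly conjugate factors is the delicate point. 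An alternative route avoids the Fourier domain altogether by transcribing the matrix iteration directly with the tensor T-QR, setting $\left[\mathcal{L}_{k+1},\sim\right]=\text{T-QR}\left(\mathcal{A}\ast\mathcal{R}_{k}^{\ast}\right)$, $\left[\mathcal{R}_{k+1},\mathcal{T}\right]=\text{T-QR}\left(\mathcal{A}^{\ast}\ast\mathcal{L}_{k+1}\right)$ and $\mathcal{D}_{k+1}=\mathcal{T}^{\ast}$; there the identity $\mathcal{A}=\mathcal{L}_{k+1}\ast\mathcal{D}_{k+1}\ast\mathcal{R}_{k+1}^{\ast}$ reduces to the projection identity $\mathcal{L}_{k+1}\ast\mathcal{L}_{k+1}^{\ast}\ast\mathcal{A}=\mathcal{A}$, and the obstacle becomes showing this range condition (which is where $r\ge\text{rank}_{\text{t}}\left(\mathcal{A}\right)$ and convergence of the alternation enter).
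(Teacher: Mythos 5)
Your proposal follows essentially the same route as the paper's proof: pass to the Fourier domain, apply the matrix CSVD-QR to the slices $i=1,\dots,\left[\frac{n_{3}+1}{2}\right]$, define the remaining slices by conjugation via (\ref{conj_property}), and invert the DFT to recover real factors satisfying $\mathcal{A}=\mathcal{L}\ast \mathcal{D}\ast\mathcal{R}$. The ``delicate point'' you flag about phase conventions in the QR steps is not actually an obstacle: since the second-half slices are \emph{defined} as conjugates rather than recomputed by CSVD-QR, conjugation automatically preserves the product identity, the orthonormality, and the triangular structure, which is precisely the paper's (implicit) argument.
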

\begin{proof}
According to (\ref{2.2}), (\ref{conj_property}) and CSVD-QR. We can construct CSVD-QR of each $\hat{\mathbf{A}}^{\left ( i \right ) }$ by using the following method. For $i=1,...,\left[\frac{n_{3}+1}{2} \right]$, $\hat{\mathbf{A}}^{\left ( i \right ) }$ is decomposed into $\hat{\mathbf{L}}^{\left ( i \right ) }\hat{\mathbf{D}}^{\left ( i \right ) }\hat{\mathbf{R}}^{\left ( i \right ) }$ by CSVD-QR. Then for $i=\left[\frac{n_{3}+1}{2} \right]+1,...,n_{3}$, let $\hat{\mathbf{L}}^{\left ( i \right ) }=\text{conj}\left(\hat{\mathbf{L}}^{\left ( n_{3}-i+2 \right ) }\right)$, $\hat{\mathbf{D}}^{\left ( i \right ) }=\hat{\mathbf{D}}^{\left ( n_{3}-i+2 \right ) }$ and $\hat{\mathbf{V}}^{\left ( i \right ) }=\text{conj}\left(\hat{\mathbf{V}}^{\left ( n_{3}-i+2 \right ) }\right)$. Thus, we can obtain that $\hat{\mathbf{A}}^{\left ( i \right ) }=\hat{\mathbf{L}}^{\left ( i \right ) }\hat{\mathbf{D}}^{\left ( i \right ) }\hat{\mathbf{R}}^{\left ( i \right ) }$ for $i=1,...,n_{3}$ easily. Besides, $\bar{\mathbf{A}}=\bar{\mathbf{L}}\bar{\mathbf{D}}\bar{\mathbf{R}}$. 

Finally, on the basis of (\ref{2.2}), we can get the real block circulant matrices $\text{bcirc}\left(\mathcal{L}\right)$, $\text{bcirc}\left(\mathcal{D}\right)$ and $\text{bcirc}\left(\mathcal{R}\right)$ so that $\text{bcirc}\left(\mathcal{A}\right)$ is easy to obtain. Hence, the conclusion is obvious by folding up the result $\text{bcirc}\left(\mathcal{A}\right)$.
\end{proof}
\begin{algorithm}[H]
\caption{: Computing an Approximate T-SVD via QR Decomposition (CTSVD-QR)}
\label{algorithm1}
\begin{algorithmic}[1]
\Require  
$\mathcal{A}\in \mathbb{R}^{n_{1}\times n_{2}\times n_{3}}$.
\Ensure
CTSVD-QR components $\mathcal{L}$,$\mathcal{D}$ and $\mathcal{R}$ of $\mathcal{A}$.
\State Compute $\hat{\mathcal{A} } =\text{fft}\left ( \mathcal{A},\left [  \right ],3   \right ) $.
\State Compute each frontal slice of $\hat{\mathcal{L} }$, $\hat{\mathcal{D} }$ and $\hat{\mathcal{R} }$ from $\hat{\mathcal{A} }$ by
\For{$i=1,...,\left[\frac{n_{3}+1}{2} \right]$}
\State $\left [ \hat{\mathbf{L}}^{\left ( i \right ) }  ,\hat{\mathbf{D}}^{\left ( i \right ) },\hat{\mathbf{R}}^{\left ( i \right ) }  \right ]=\text{CSVD-QR}\left (\hat{\mathbf{A}}^{\left ( i \right ) }   \right )   $;
\EndFor
\For{$i=\left[\frac{n_{3}+1}{2} \right]+1,...,n_{3}$}
\State $\hat{\mathbf{L}}^{\left ( i \right ) }=\text{conj}\left(\hat{\mathbf{L}}^{\left ( n_{3}-i+2 \right ) }\right)$;
\State $\hat{\mathbf{D}}^{\left ( i \right ) }=\hat{\mathbf{D}}^{\left ( n_{3}-i+2 \right ) }$;
\State $\hat{\mathbf{V}}^{\left ( i \right ) }=\text{conj}\left(\hat{\mathbf{V}}^{\left ( n_{3}-i+2 \right ) }\right)$;
\EndFor
\State Compute $\mathcal{L} =\text{ifft}\left ( \hat{\mathcal{L}},\left [  \right ],3   \right ) $, $\mathcal{D} =\text{ifft}\left ( \hat{\mathcal{D}},\left [  \right ],3   \right ) $, and $\mathcal{R} =\text{ifft}\left ( \hat{\mathcal{R}},\left [  \right ],3   \right ) $.
\end{algorithmic}
\end{algorithm}

The result of Theorem \ref{CTSVD-QR} illustrates that a third-order tensor can be factorized into three components, which including two orthogonal tensors. Let $\mathcal{D}_{k}$ represent the $k$-th iteration of CTSVD-QR, $\mathcal{D}_{k}$ will converge to $\mathcal{S}$, which is the f-diagonal tensor in t-SVD. This drives us to use CTSVD-QR and some numberical tests will prove that is correct subsequently.
In the next subsection, we will use this method to initialize the data of the tensor completion problem.

\subsection{Tensor Completion based on $L_{2,1}$-Norm and Tensor QR Decomposition(TLNM-TQR)}
As we all know that the tensor completion problem can be modified as Problem (\ref{1.2}). If we wish to optimize tensor nuclear norm, we will use t-SVD to factorize $\mathcal{X}$ in general. However, solving Problem (\ref{1.2}) need a large number of iterations cause that t-SVD is a full rank decomposition.

Fortunately, we have CTSVD-QR to replace t-SVD so that the iterations can be reduced. In order to improve the speed of tensor completion, we also use tensor $L_{2,1}$-norm to replace the conventional tensor nuclear norm. Learning that $L_{2,1}$-norm of matrix is useful in feature extraction and has a good effect in solving the problem of matrix completion \cite{LDYCJH2019} and low-rank representation \cite{LLY2010}, here tensor $L_{2,1}$-norm of $\mathcal{X}\in\mathbb{R}^{n_{1}\times n_{2}\times n_{3}}$ is defined as follows:
\begin{equation}\label{L2,1-norm of tenosr}
\left \| \mathcal{X}  \right \| _{2,1}=
 {\textstyle \sum_{j=1}^{n_{2}}}\sqrt{ {\textstyle \sum_{i=1}^{n_{1}}} {\textstyle \sum_{k=1}^{n_{3}}} \mathcal{X}_{i,j,k}^{2}  }  .
\end{equation}
It is easy to know that the valid norm (\ref{L2,1-norm of tenosr}) satisfies the three norm conditions, including the triangle inequality, which can be proved by Cauchy-Schwarz inequality. In addition, we have an important formula as follows,
\begin{equation}\label{L2,1-norm of tenosr-other}
\left \| \mathcal{X}  \right \| _{2,1}={\textstyle \sum_{j=1}^{n_{2}}}\left \| \mathcal{X}\left ( :,j,: \right )   \right \|  _{F}.
\end{equation}

As for this tensor $L_{2,1}$-norm, we have the following properties.
%
	
\begin{theorem}\label{Tensor L2,1-norm}
As for CTSVD-QR of $\mathcal{A}\in \mathbb{R}^{n_{1}\times n_{2}\times n_{3}}$, $\mathcal{A}=\mathcal{L}\ast \mathcal{D}\ast\mathcal{R}$, we have the relationship between $\left \| \mathcal{D}  \right \| _{\ast}$ and $\left \|\mathcal{D}  \right \| _{2,1}$, i.e.,$\left \| \mathcal{D}  \right \| _{\ast}\le \left \|\mathcal{D}   \right \| _{2,1}$.
\end{theorem}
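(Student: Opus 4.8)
The plan is to notice that the asserted inequality concerns $\mathcal{D}$ alone; the factors $\mathcal{L}$ and $\mathcal{R}$ are only scene-setting, so it suffices to prove $\|\mathcal{X}\|_{\ast}\le\|\mathcal{X}\|_{2,1}$ for an arbitrary third-order tensor $\mathcal{X}$. First I would split $\mathcal{X}$ by lateral slices: let $\mathcal{X}_{j}$ denote the tensor that agrees with $\mathcal{X}$ on the $j$-th lateral slice $\mathcal{X}(:,j,:)$ and vanishes on every other lateral slice, so that $\mathcal{X}=\sum_{j}\mathcal{X}_{j}$. Since the tensor nuclear norm is a genuine norm it satisfies the triangle inequality, giving $\|\mathcal{X}\|_{\ast}\le\sum_{j}\|\mathcal{X}_{j}\|_{\ast}$. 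The whole problem then collapses to the single-slice estimate $\|\mathcal{X}_{j}\|_{\ast}\le\|\mathcal{X}(:,j,:)\|_{F}$, after which (\ref{L2,1-norm of tenosr-other}) closes the argument by summation.

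To prove the single-slice estimate I would pass to the Fourier domain. By the convention $\|\mathcal{X}\|_{\ast}=\frac{1}{n_{3}}\|\bar{\mathbf{X}}\|_{\ast}$, together with (\ref{2.2}), we have $\|\mathcal{X}_{j}\|_{\ast}=\frac{1}{n_{3}}\sum_{i=1}^{n_{3}}\|\hat{\mathbf{X}}_{j}^{(i)}\|_{\ast}$, where the right-hand norm is the matrix nuclear norm. The key structural observation is that because the DFT acts only along the tube direction and does not mix lateral indices, each frontal slice $\hat{\mathbf{X}}_{j}^{(i)}$ has only its $j$-th column nonzero; it is therefore a rank-one matrix, whose nuclear norm equals its Frobenius norm, namely the Euclidean length of that single column. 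Hence $\|\mathcal{X}_{j}\|_{\ast}=\frac{1}{n_{3}}\sum_{i}\|\hat{\mathbf{X}}_{j}^{(i)}\|_{F}$.

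On the other hand, (\ref{2.3}) applied to $\mathcal{X}_{j}$ gives $\|\mathcal{X}(:,j,:)\|_{F}=\|\mathcal{X}_{j}\|_{F}=\frac{1}{\sqrt{n_{3}}}\bigl(\sum_{i}\|\hat{\mathbf{X}}_{j}^{(i)}\|_{F}^{2}\bigr)^{1/2}$. Writing $a_{i}=\|\hat{\mathbf{X}}_{j}^{(i)}\|_{F}\ge 0$, the Cauchy–Schwarz inequality $\sum_{i}a_{i}\le\sqrt{n_{3}}\bigl(\sum_{i}a_{i}^{2}\bigr)^{1/2}$ rearranges to $\frac{1}{n_{3}}\sum_{i}a_{i}\le\frac{1}{\sqrt{n_{3}}}\bigl(\sum_{i}a_{i}^{2}\bigr)^{1/2}$, which is exactly $\|\mathcal{X}_{j}\|_{\ast}\le\|\mathcal{X}(:,j,:)\|_{F}$. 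Summing over $j$ and invoking (\ref{L2,1-norm of tenosr-other}) yields $\|\mathcal{X}\|_{\ast}\le\sum_{j}\|\mathcal{X}(:,j,:)\|_{F}=\|\mathcal{X}\|_{2,1}$, and specializing $\mathcal{X}=\mathcal{D}$ gives the theorem.

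The only genuinely delicate point is the normalization bookkeeping: the tensor nuclear norm carries a factor $1/n_{3}$ while the Frobenius identity (\ref{2.3}) carries $1/\sqrt{n_{3}}$, and it is precisely this mismatch that Cauchy–Schwarz absorbs. I would therefore state the convention $\|\mathcal{X}\|_{\ast}=\frac{1}{n_{3}}\|\bar{\mathbf{X}}\|_{\ast}$ explicitly at the outset, since the direction of the inequality hinges on it. It is worth flagging that, somewhat counterintuitively, the per-slice comparison runs $\|\mathcal{X}_{j}\|_{\ast}\le\|\mathcal{X}_{j}\|_{F}$, the reverse of the familiar matrix inequality $\|\mathbf{M}\|_{\ast}\ge\|\mathbf{M}\|_{F}$; this reversal is legitimate only because every frontal slice of $\hat{\mathcal{X}}_{j}$ is rank one, so the two matrix norms coincide slicewise and averaging over the $n_{3}$ slices tips the balance in the desired direction.
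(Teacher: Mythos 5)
Your proposal is correct, and its skeleton is the same as the paper's: split the tensor into lateral slices exactly as in (\ref{D to D^j 1})--(\ref{D to D^j 2}), apply the triangle inequality for the tensor nuclear norm as in (\ref{16-1}), prove a per-slice estimate, and sum up via (\ref{L2,1-norm of tenosr-other}). The genuine difference is in the per-slice step, and there your version is the rigorous one. The paper's equation (\ref{16}) asserts $\|\mathcal{D}^{k}\|_{\ast}=\frac{1}{n_{3}}\|\bar{\mathbf{D}^{k}}\|_{\ast}=\frac{1}{n_{3}}\|\bar{\mathbf{D}^{k}}\|_{F}$, but the second equality is false in general: as you observe, every diagonal block of $\bar{\mathbf{D}^{k}}$ (i.e.\ every frontal slice of $\hat{\mathcal{D}^{k}}$) has rank at most one, so the nuclear norm of $\bar{\mathbf{D}^{k}}$ is the \emph{sum} of the blocks' Frobenius norms, whereas $\|\bar{\mathbf{D}^{k}}\|_{F}$ is the square root of the sum of their squares; these coincide only when at most one block is nonzero. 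The downstream equality (\ref{18-1}), with its constant $\frac{1}{\sqrt{n_{3}}}$, fails for the same reason: for $\mathcal{D}\in\mathbb{R}^{1\times 1\times n_{3}}$ whose single tube is $(1,0,\dots,0)$ (a tensor that is its own CTSVD-QR middle factor), one has $\|\mathcal{D}\|_{\ast}=1$ while $\frac{1}{\sqrt{n_{3}}}\|\mathcal{D}\|_{2,1}=\frac{1}{\sqrt{n_{3}}}$, so the strengthened bound $\|\mathcal{D}\|_{\ast}\le\frac{1}{\sqrt{n_{3}}}\|\mathcal{D}\|_{2,1}$ that the paper's chain would imply cannot hold. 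Your replacement step supplies exactly what is missing: the rank-one structure gives $\|\mathcal{X}_{j}\|_{\ast}=\frac{1}{n_{3}}\sum_{i}a_{i}$ with $a_{i}$ the block Frobenius norms, and Cauchy--Schwarz, $\sum_{i}a_{i}\le\sqrt{n_{3}}(\sum_{i}a_{i}^{2})^{1/2}$, absorbs the mismatch between the $\frac{1}{n_{3}}$ normalization of the tensor nuclear norm and the $\frac{1}{\sqrt{n_{3}}}$ of (\ref{2.3}), yielding the per-slice bound $\|\mathcal{X}_{j}\|_{\ast}\le\|\mathcal{X}_{j}\|_{F}$ with constant one, after which summation closes the argument. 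In short, the paper's erroneous equalities happen to point in the favorable direction, so the theorem itself survives, but your proof is the one that actually establishes it; your closing remark about the reversal of the matrix inequality $\|\mathbf{M}\|_{\ast}\ge\|\mathbf{M}\|_{F}$ pinpoints precisely why the slicewise comparison is legitimate.
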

\begin{proof}
	The tensor $\mathcal{D}$ can be decomposed as follows:
	\begin{equation}\label{D to D^j 1}
	\mathcal{D}= {\textstyle \sum_{k=1}^{r}} \mathcal{D} ^{k},
	\end{equation}
	\begin{equation}\label{D to D^j 2}
	\mathcal{D} ^{k}_{i,j,t}=\begin{cases}
	\mathcal{D}_{i,k,t},&\left ( j=k \right )  \\
	0,&\left ( j\ne k \right ) 
	\end{cases},
	\end{equation}
	where $i,j=1,...,r$, $k=1,...,n_{3}$ and $\mathcal{D} ^{k}\in \mathbb{R}^{r\times r\times n_{3}}$. From (\ref{D to D^j 1}), we have 
	\begin{equation}\label{16-1}
	\left \| \mathcal{D}  \right \| _{\ast}=\left \| {\textstyle \sum_{k=1}^{r}} \mathcal{D} ^{k}  \right \| _{\ast}\le {\textstyle \sum_{k=1}^{r}}\left \|  \mathcal{D} ^{k}  \right \| _{\ast}.
	\end{equation}
	According to the property of tensor nuclear norm in \cite{LFCLLY2019} and the feature of matrix $\bar{\mathbf{D}^{k} }$, we have
	\begin{equation}\label{16}
	\left \| \mathcal{D}^{k}  \right \|_{\ast }=\frac{1}{n_{3}} \left \|  \bar{\mathbf{D}^{k} }\right \| _{\ast} =\frac{1}{n_{3}} \left \|  \bar{\mathbf{D}^{k} }\right \| _{F}  
	\end{equation}
	By combining (\ref{2.3}), (\ref{L2,1-norm of tenosr-other}), (\ref{16-1}) and (\ref{16}), here we can obtain
	\begin{equation}\label{18-1}
	{\textstyle \sum_{k=1}^{r}}\left \|  \mathcal{D} ^{k}  \right \| _{\ast}=\frac{1}{\sqrt{n_{3}}}\left \| \mathcal{D}  \right \| _{2,1} 
	\end{equation}
	Then, the conclusion can be proved via (\ref{16-1}) and (\ref{18-1})
\end{proof}

By the above description, the conclusion of Theorem \ref{Tensor L2,1-norm} shows that the nuclear norm of tensor $\mathcal{D}$ is clearly the lower bound on its tensor $L_{2,1}$-norm. This drives us to use the $L_{2,1}$-norm to Problem (\ref{1.2}) so that the problem can be rewritten as follows:
\begin{equation}\label{15}
\min_{\mathcal{D} } \left \| \mathcal{D}  \right \| _{2,1}   \quad \text{s.t.} \begin{cases}
 \mathcal{X}=\mathcal{L}\ast \mathcal{D}\ast \mathcal{R}    \\
 \mathcal{X}_{i,j,k}= \mathcal{M}_{i,j,k},\quad \left ( i,j,k \right )\in  \Omega
\end{cases}
\end{equation}
By the way, the optimization function of $L_{2,1}$-norm is convex, so the alternating direction method of multipliers (ADMM) \cite{LFCLLY2019} \cite{LFYL2018} can be used to settle Problem (\ref{15}). The augmented Lagrangian function of (\ref{15}) is
\begin{equation}
\text{L}\left ( \mathcal{L},\mathcal{D},\mathcal{R},\mathcal{Y},\mu    \right )   =\left \| \mathcal{D}  \right \| _{2,1}+\left \langle \mathcal{Y},\mathcal{X} - \mathcal{L}\ast\mathcal{D}\ast\mathcal{R}    \right \rangle  +\frac{\mu }{2}\left \|\mathcal{X} - \mathcal{L}\ast\mathcal{D}\ast\mathcal{R}  \right \|  _{F}^{2} ,
\end{equation}
where $\mathcal{Y} \in\mathbb{R}^{n_{1}\times n_{2}\times n_{3}} $ and $\mu > 0$. 

Different from the general ADMM method, whose each iteration follows a fixed stride, here we add CTSVD-QR to optimize the subproblem of ADMM to make the results of iteration tend to be optimal quickly so that our method will only need a few iterations to converge, which benefits from the fast and precise CTSVD-QR decomposition.

First step of all, $\mathcal{L}_{k+1}$ and $\mathcal{R}_{k+1}$ are updated by solving the problem as follows:
\begin{equation}\label{17}
\min_{\mathcal{L},\mathcal{R} } \left \| \left ( \mathcal{X}_{k}+\frac{\mathcal{Y}_{k} }{\mu _{k}} \right ) -\mathcal{L}\ast \mathcal{D}_{k}\ast \mathcal{R} \right \| ^{2 } _{F},
\end{equation}
where the subscript $k$ means the result of the $k$th iteration. Because the successive iteration of CTSVD-QR is special similar, we first use the ADMM method to initialize $\mathcal{L}_{k}$ and $\mathcal{R}_{k}$ in CTSVD-QR. So as for the minimization Problem (\ref{17}), $\mathcal{L}_{k+1}$ and $\mathcal{R}_{k+1}$ can be given by CTSVD-QR, i.e.,
\begin{equation}\label{18}
\left [ \mathcal{L}_{k+1},\sim    \right ] =\text{T-QR}\left(\mathcal{X}_{c}\ast \mathcal{R}^{\ast }_{k} \right);
\end{equation}
\begin{equation}\label{19}
\left [ \mathcal{R}_{k+1},\sim    \right ] =\text{T-QR}\left(\mathcal{X}_{c}^{\ast}\ast \mathcal{L}_{k+1} \right),
\end{equation}
where T-QR is the tensor QR decomposition in Theorem \ref{T-QR} and $\mathcal{X}_{c}=\mathcal{X}_{k}+\frac{\mathcal{Y}_{k} }{\mu _{k}}$. Because CTSVD-QR can get a precise result with only a few iterations. So when CTSVD-QR takes part in the iterative process of traditional ADMM, the speed of our tensor completion method can be improved, and the result of experiment is excellent by using such artful initialization.

The next step, we are going to update $\mathcal{D}$ and $\mathcal{X}$. For getting $\mathcal{D}_{k+1}$, there is a $L_{2,1}$-norm minimization problem as follows:
\begin{equation}\label{27}
\mathcal{D} _{k+1}=\text{arg}\min_{\mathcal{D} }  \frac{1}{\mu _{k}}\left \| \mathcal{D}  \right \|  _{2,1} +\frac{1}{2}\left \| \mathcal{D} -\mathcal{L}^{\ast }_{k+1} \ast \mathcal{X}_{c} \ast\mathcal{R}^{\ast }_{k+1}  \right \|^{2}_{F}  .
\end{equation}

Let $\mathcal{D} _{T}=\mathcal{L}^{\ast }_{k+1} \ast \mathcal{X}_{c} \ast\mathcal{R}^{\ast }_{k+1}$ and refer to (\ref{L2,1-norm of tenosr-other}), (\ref{D to D^j 1}) and (\ref{D to D^j 2}), we get the subproblem of Problem (\ref{27}) as follows,
\begin{equation}\label{problem D^j_k+1}
\mathcal{D}^{j} _{k+1}=\text{arg}\min_{\mathcal{D}^{j} }  \frac{1}{\mu _{k}}\left \| \mathcal{D}^{j}  \right \|  _{F} +\frac{1}{2}\left \| \mathcal{D}^{j} -\mathcal{D}^{j}_{T}  \right \|^{2}_{F}  ,
\end{equation}
where $j=1,...,r$. When we notice that (\ref{2.3}) and (\ref{16}), it is not difficult to gain
\begin{equation}
\left \| \mathcal{D}^{j}  \right \|  _{F}=\sqrt{n_{3}}\left \| \mathcal{D}^{j}  \right \|  _{\ast},
\end{equation}
so that the Problem (\ref{problem D^j_k+1}) can be equivalent to
\begin{equation}\label{problem D^j_k+1*}
\mathcal{D} _{k+1}\left(:,j,:\right)=\text{arg}\min_{\mathcal{D}\left(:,j,:\right) }  \frac{1}{\mu _{k}}\left \| \mathcal{D}\left(:,j,:\right)  \right \|  _{\ast} +\frac{1}{2}\left \| \mathcal{D}\left(:,j,:\right) -\mathcal{D}_{T}\left(:,j,:\right)  \right \|^{2}_{F}  .
\end{equation}
Then the optimal solver of Problem (\ref{problem D^j_k+1*}) can be given by the Theorem 4.2 of \cite{LFCLLY2019}, i.e.,
\begin{equation}\label{D_k+1_1}
\mathcal{D} _{k+1}\left ( :,j,: \right ) =\mathcal{L}^{0}\ast\mathcal{D}^{0}_{\frac{1}{\mu } }\ast\mathcal{R}^{0},
\end{equation}
\begin{equation}\label{D_k+1_2}
\mathcal{D}^{0}_{\frac{1}{\mu } }=\text{ifft}\left(\max\left \{\hat{\mathcal{D}^{0}}-\frac{1}{\mu },0\right \} ,[],3\right),
\end{equation}
where $\mathcal{L}^{0}$, $\mathcal{D}^{0}$ and  $\mathcal{R}^{0}$ is CTSVD-QR of $\mathcal{D}_{T}\left(:,j,:\right)$. 

On account of
\begin{center}
 $\mathcal{D}^{0}\left(:,j,t\right)=\left\|\mathcal{D}_{T}\left(:,j,t\right)\right\|_{F}$,
\end{center} 
where $t=1,...,n_{3}$, (\ref{D_k+1_1}) and (\ref{D_k+1_2}) can be modified as
\begin{equation}\label{D_k+1_3}
\mathcal{D}_{k+1}\left(:,j,t\right)=\text{ifft}\left(\frac{\max \left \{ \left \| \hat{ \mathcal{D}_{T}}\left(:,j,t\right) \right \|_{F} -\frac{1}{\mu },0 \right \}  }{\left \| \hat{ \mathcal{D}_{T}}\left(:,j,t\right) \right \|_{F}}\ast  \hat{ \mathcal{D}_{T}}\left(:,j,t\right) ,[],3\right) .
\end{equation}
By such optimization method, we can build a for loop in Matlab to solve the Problem (\ref{27}) to reduce the memory load on the computer.

As for $\mathcal{X}_{k+1}$, it is updated by
\begin{equation}\label{24}
\mathcal{X}_{k+1}=\mathcal{L}_{k+1}\ast\mathcal{D}_{k+1}\ast\mathcal{R}_{k+1}-\left(\mathcal{L}_{k+1}\ast\mathcal{D}_{k+1}\ast\mathcal{R}_{k+1}\right)_{\Omega}+\mathcal{M}_{\Omega},
\end{equation}
where $\Omega$ is defined in (\ref{1.1}).

The final step, $\mathcal{Y}_{k+1}$ and $\mu_{k+1}$ are updated as follows:
\begin{equation}\label{25}
\mathcal{Y}_{k+1}=\mathcal{Y}_{k}+\mu_{k}\left(\mathcal{X}_{k+1}-\mathcal{L}_{k+1}\ast\mathcal{D}_{k+1}\ast\mathcal{R}_{k+1}\right),
\end{equation}
\begin{equation}\label{26}
\mu_{k+1}=\rho \mu_{k}.
\end{equation}
where $\rho\ge 1$.

\begin{algorithm}[H]
	\caption{: Tensor $L_{2,1}$-Norm Minimization Method Based on tensor QR decomposition (TLNM-TQR)}
	\label{algorithm2}
	\begin{algorithmic}[1]
		\Require  
		A real incomplete tensor $\mathcal{M}\in \mathbb{R}^{n_{1}\times n_{2}\times n_{3}}$;
		$\Omega $, the set of observed locations. 
		\Ensure
		$\mathcal{X}$, the recovery result of $\mathcal{M}$  .
		\State Initialize $r>0$, $k=0$, $t>0$, $\mu>0$, $\rho>0$, $\epsilon >0$, $\mathcal{Y}=\mathcal{O}\in \mathbb{R}^{n_{1}\times n_{2}\times n_{3}}$, $\mathcal{L}_{0}=\mathcal{I}\in \mathbb{R}^{n_{1}\times r\times n_{3}}$, $\mathcal{D}_{0}=\mathcal{I}\in \mathbb{R}^{r\times r\times n_{3}}$, $\mathcal{R}_{0}=\mathcal{I}\in \mathbb{R}^{r\times n_{2}\times n_{3}}$, $\mathcal{X}_{0}=\mathcal{M}_{\Omega }$.
		\While{$\left \|\mathcal{L}_{k}\ast\mathcal{D}_{k}\ast\mathcal{R}_{k}-\mathcal{X}_{k}\right \| ^{2}_{F}\ge \epsilon \quad \text{and}\quad k>t $}
		
		\State$\left [ \mathcal{L}_{k+1},\sim    \right ] =\text{T-QR}\left(\left(\mathcal{X}_{k}+\frac{\mathcal{Y}_{k} }{\mu _{k}}\right)\ast \mathcal{R}^{\ast }_{k} \right)$;
		\State $\left [ \mathcal{R}_{k+1},\mathcal{D}_{T}^{\ast}    \right ] =\text{T-QR}\left(\left(\mathcal{X}_{k}+\frac{\mathcal{Y}_{k} }{\mu _{k}}\right)^{\ast}\ast \mathcal{L}_{k+1} \right)$;
		\For{$t=1,...,n_{3}$}
		\For{$j=1,...,r$}
        \State $\mathcal{D}_{k+1}\left(:,j,t\right)=\text{ifft}\left(\frac{\max \left \{ \left \| \hat{ \mathcal{D}_{T}}\left(:,j,t\right) \right \|_{F} -\frac{1}{\mu },0 \right \}  }{\left \| \hat{ \mathcal{D}_{T}}\left(:,j,t\right) \right \|_{F}}\ast  \hat{ \mathcal{D}_{T}}\left(:,j,t\right) ,[],3\right) $;
        \EndFor
        \EndFor
        \State $\mathcal{X}_{k+1}=\mathcal{L}_{k+1}\ast\mathcal{D}_{k+1}\ast\mathcal{R}_{k+1}-\left(\mathcal{L}_{k+1}\ast\mathcal{D}_{k+1}\ast\mathcal{R}_{k+1}\right)_{\Omega}+\mathcal{M}_{\Omega}$;
        \State $\mathcal{Y}_{k+1}=\mathcal{Y}_{k}+\mu_{k}\left(\mathcal{X}_{k+1}-\mathcal{L}_{k+1}\ast\mathcal{D}_{k+1}\ast\mathcal{R}_{k+1}\right)$;
        \State $\mu_{k+1}=\rho \mu_{k}$;

		
		\EndWhile
 \State \Return $\mathcal{X}=\mathcal{X}_{k}$.
	\end{algorithmic}
\end{algorithm}
ADMM is a gradient descent method and has the convergence in \cite{LFCLLY2019}. Our TLNM-TQR method can be considered as a modified-ADMM with t-QR added, whose stride is set up to be more flexible, so that TLNM-TQR can finally attain the global optimal solution with tiny error quickly.


\section{Numerical Tests And Applications}
To demonstrate the proposed methods are practicable, several comparative experiments adopting synthetic and real-world data, such as videos and color images, are performed, and its results are compared with root-mean-square error (RMSE), i.e, $\text{RMSE}=\sqrt{\frac{\left \| \mathcal{X}-\mathcal{Y}   \right \| _{F} ^{2}}{n_{1}n_{2}n_{3}}  }$, where $\mathcal{X}, \mathcal{Y}\in\mathbb{R}^{n_{1}\times n_{2}\times n_{3}} $ are the result date and original data respectively.

All our methods have been implemented for tensor completion on MATLAB 2018a platform equipped with an Intel Core i3-3110M CPU and 8GB of RAM.
\subsection{Convergence of CTSVD-QR}
In this section, synthetic tensor $\mathcal{X}$ is constructed as follows:
\begin{equation}
\mathcal{X} =\mathcal{M}^{m\times r_{1} \times p}_{1}  \ast \mathcal{M}^{r_{1} \times n \times p}_{2} ,
\end{equation}
\begin{equation}
\mathcal{M}^{m\times r_{1} \times p}_{1}=\text{randn}\left(m,r_{1},p\right),
\end{equation}
\begin{equation}
\mathcal{M}^{r_{1} \times n \times p}_{2}=\text{randn}\left(r_{1},n,p\right),
\end{equation}
where $r_{1}\in \left[1,n\right]$ is the tubal-rank of $\mathcal{X}$. CTSVD-QR is tested on $\mathcal{X}$ so as to prove this tensor tri-factorization is feasible. 
\begin{figure}[H]
	\centering
	\includegraphics[scale=0.6]{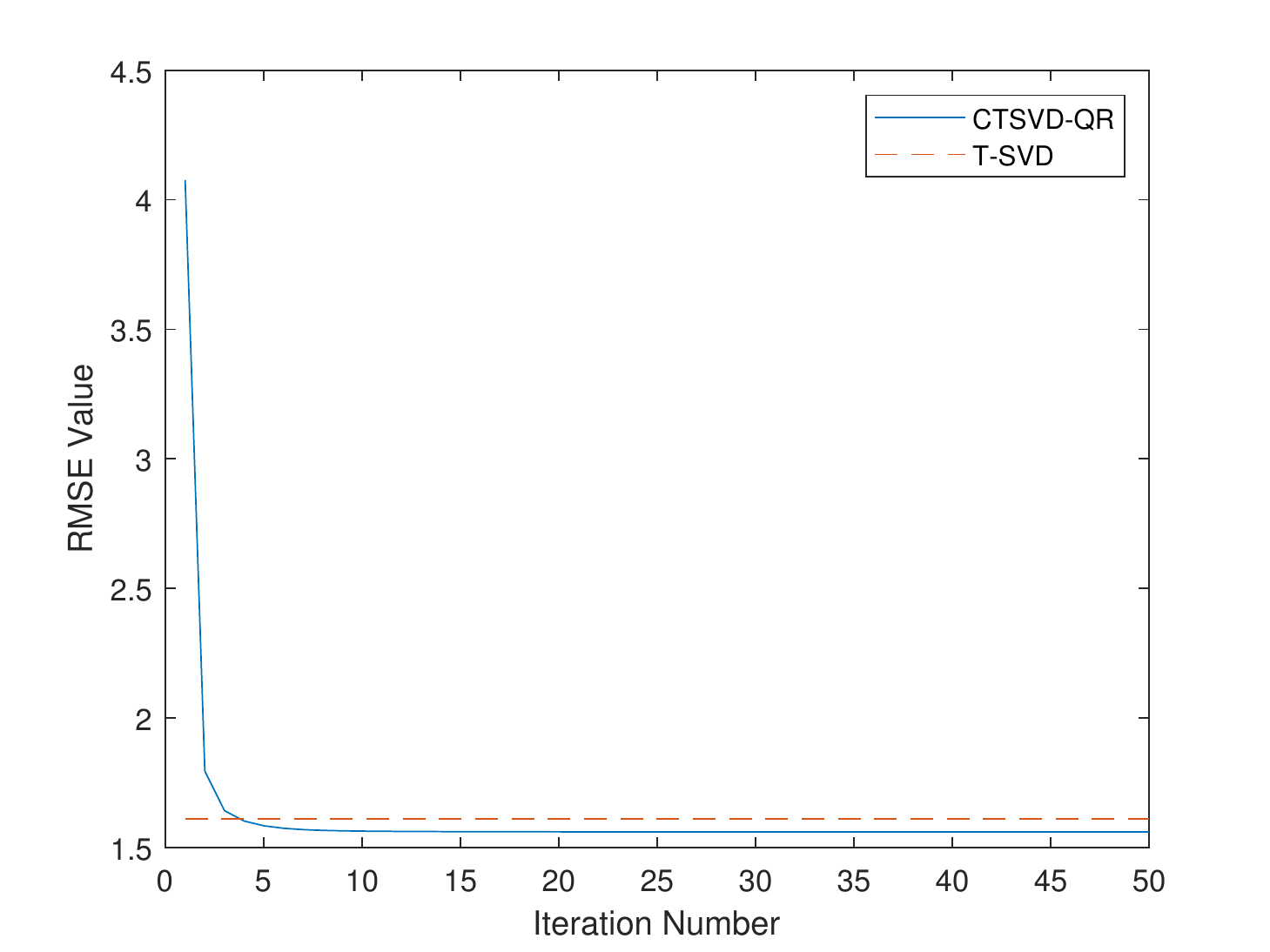}
	\caption{RMSE Value of CTSVD-QR}
	\label{Convergence of CTSVD-QR}
\end{figure}
First, let $m=n=300, p=3, r_{1}=250$ and $r=200$, where $r$ is the largest $r$ singular values (tubes). Then decomposing $\mathcal{X}$ by CTSVD-QR and T-SVD respectively, we will obtain the tri-factorization of $\mathcal{X}$. Reconstructing the new tensor $\mathcal{X}$ via the result of factorization, we have the Figure \ref{Convergence of CTSVD-QR} finally.

The Figure \ref{Convergence of CTSVD-QR} shows us that CTSVD-QR will converge to a definite value at the time when the iteration number increases continually. And our accuracy of CTSVD-QR is close to T-SVD and is better than it. Besides, when their RMSE Value are nearly equal, the running times of CTSVD-QR and T-SVD are 0.3432 and 0.6864 s.

\begin{figure}[H]
	\centering
	\includegraphics[scale=0.6]{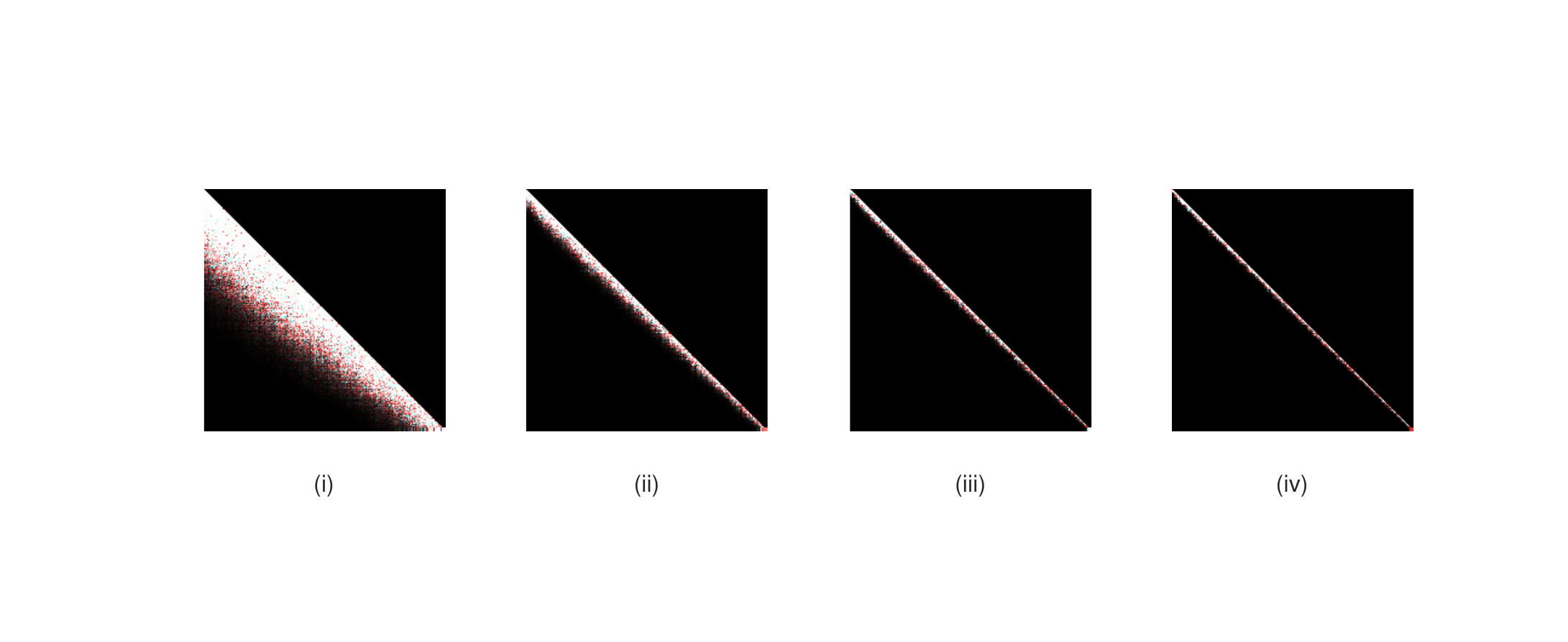}
	\caption{Convergence procedure of $\mathcal{D}_{k}$ in CTSVD-QR. $\left(\text{i}\right) k=5$. $\left(\text{ii}\right) k=20$. $\left(\text{iii}\right) k=40$. $\left(\text{iv}\right) k=60$.}
	\label{Convergence of D}
\end{figure}
In addition, we chose $\mathcal{D}_{5}$, $\mathcal{D}_{20}$, $\mathcal{D}_{40}$ and $\mathcal{D}_{60}$ to explain that $\mathcal{D}_{k}$ will converge to an f-diagonal tensor which is mentioned above. The convergence can be shown by ploting 100$\mathcal{D}_{k}\left(k=5, 20, 40, 60\right)$, as shown in Figure \ref{Convergence of D}.

\subsection{Experimental Result of TLNM-TQR}
In this section, TLNM-TQR is tested using video and color images. The RMSE value and running time are compared with three state-of-the art algorithms such as the ADMM with t-SVD as subroutine (ADMM-t-SVD) \cite{ZA2017}, HoMP \cite{YMS2015} and ELRAP4TS \cite{X2020}. For the sake of fairness, the parameters of ADMM-t-SVD, HoMP and ELRAP4TS are set to the optimal values. 

\subsubsection{Video Recovery}
Here we use a black and white gray-scale basketball video, which depicts 1.6 seconds of the game and contains 40 digital images in AVI format. Each frame of this video holds $144\times256$ pixels of black and white images so that it can be considered as a three-dimensional tensor $\mathcal{X}\in \mathbb{R}^{144\times 256\times 40}$. If we retain $50\%$ of given pixels as known entries in $\Omega$, the problem of video recovery will be a tensor completion problem. Thus, we can slove this problem with our TLNM-TQR algorithm.

\begin{figure}[H]
  \begin{center}
			\includegraphics[scale=0.45]{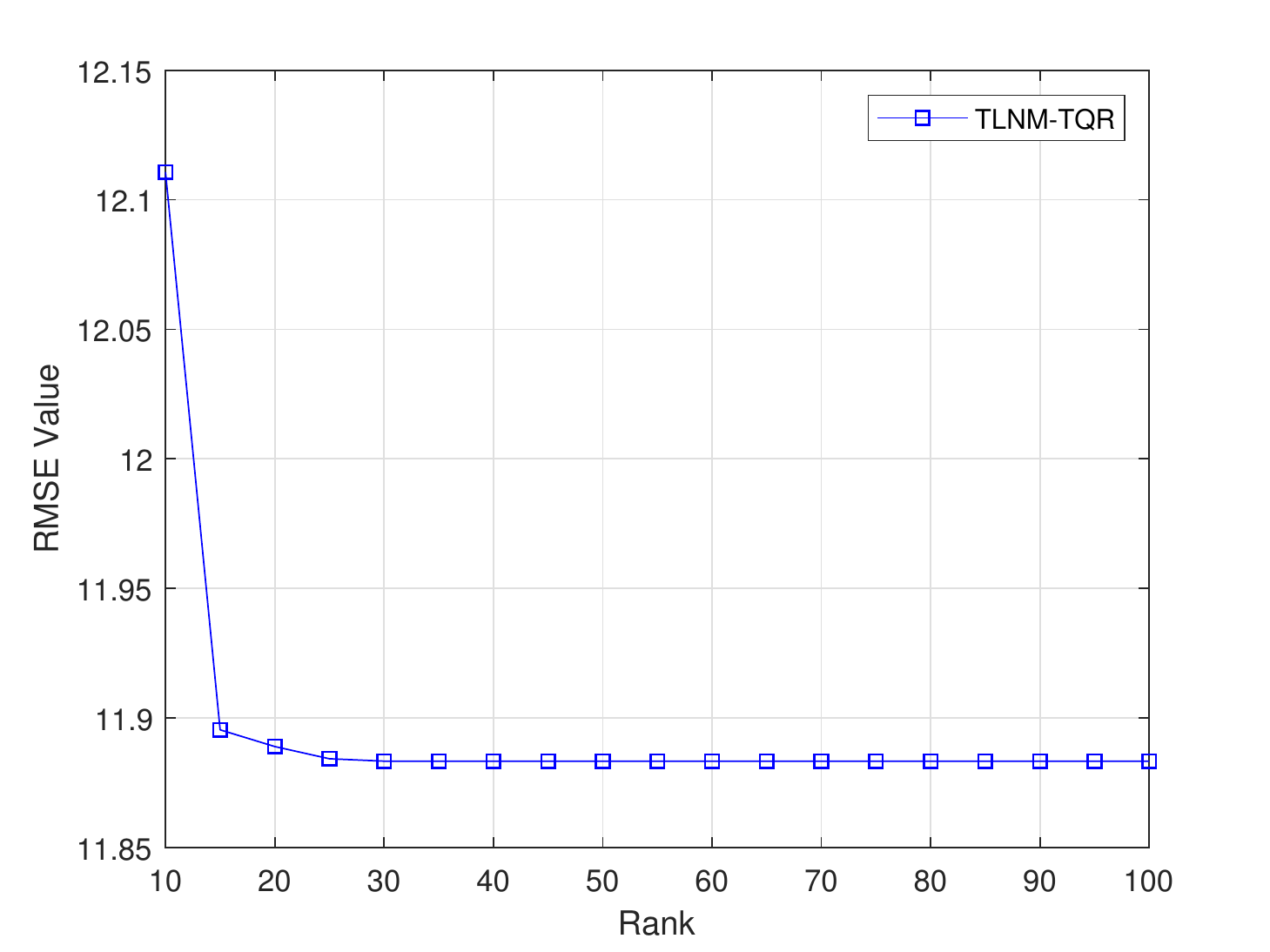}  
			\includegraphics[scale=0.45]{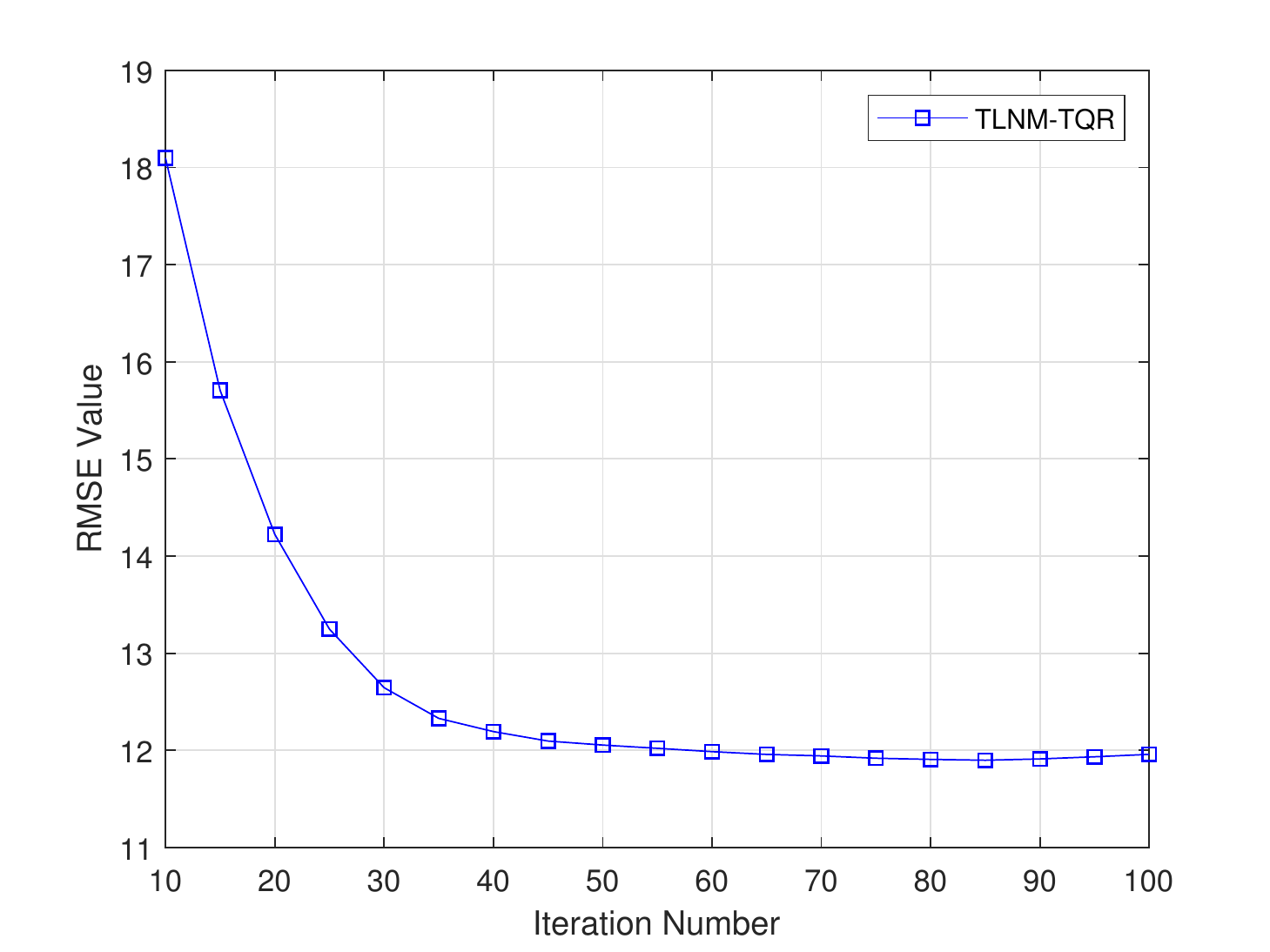}  
	\caption{Linear convergence of TLNM-TQR with 100 iterations $\left(\text{left column} \right)$ and $r=11 \left(\text{right column}\right)$} 
	\label{Convergence of TLNM-TQR}
   \end{center}
\end{figure}

To verify that TLNM-TQR is convergent, we chose a video with $50\%$ miss rate to test our algorithm. When all the parameters are fixed, the required inputs in TLNM-TQR are the iteration number and tubal-rank of result. Hence we fixed iteration number and tubal-rank respectively in Figure \ref{Convergence of TLNM-TQR}. It is easy to see that our mehtod has an excellent convergence accuracy and only needs a few iterations to converge so that our algorithm is fast and precise. This also gives us confidence to transcend other algorithms that have been popular in recent years.

\begin{figure}[H]
	\centering
	\includegraphics[scale=0.38]{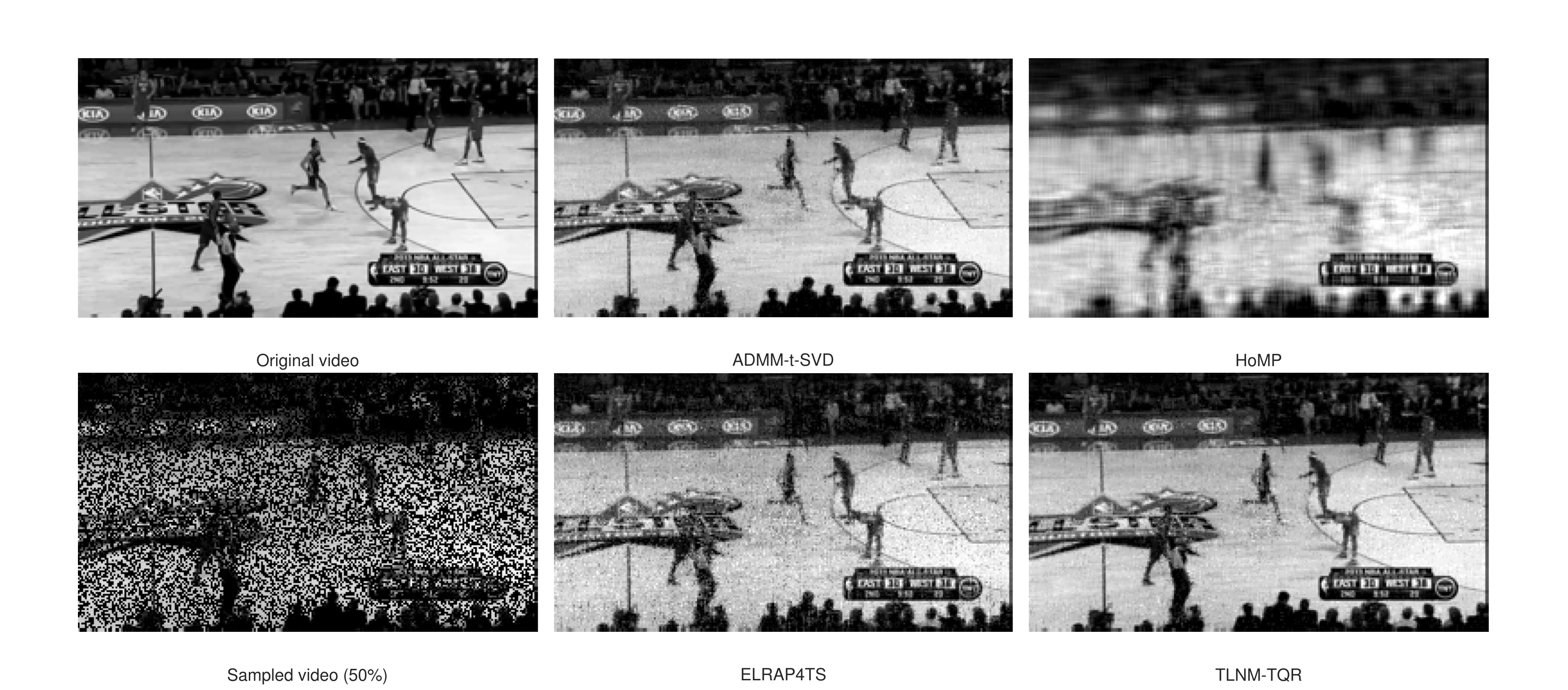} 
	\caption{The 30th frame of completion result for a basketball video.}
	\label{The 30th frame of completion result for a basketball video}
\end{figure}

In order to achieve the optimal effect of the algorithms, we set the parameter $\lambda$ of ADMM-t-SVD to $\lambda =\frac{1}{\sqrt{3\max \left ( n_{1},n_{2} \right ) } } $ and adjust its iteration to be 20. For HoMP and ELRAP4TS, we set $r=100$ and $r=70$ respectively. Besides, the $s$ in ELRAP4TS is set to $s=3$. Then the input of our TLNM-TQR method is $r=11$, and its parameter $\mu$ and $\rho$ are adjusted to be $\mu=10^{-2}$ and $\rho=1.5$ respectively. From Figure \ref{The 30th frame of completion result for a basketball video}, our algorithm is stable while the miss rate is $50\%$ and the detail comparison datas are listed in Table \ref{Runnig time and RMSE of tensor completion result on the basketball video}.
 
\begin{table}[H]
	\caption{Runnig time and RMSE of tensor completion result on the basketball video}
	\label{Runnig time and RMSE of tensor completion result on the basketball video}
	\begin{center} 
    \begin{tabular}{lcc}
	    \toprule
		Completion Approach & Running Time (seconds) & RMSE Value   \\
		\midrule
		ADMM-t-SVD          & 53.5863      & \textbf{11.7232} \\
		HoMP                & 47.4555      & 29.7052 \\
		ELRAP4TS            & 16.3333      & 20.8765 \\
		TLNM-TQR            & \textbf{11.0136}      & 12.2161 \\
		\bottomrule
    \end{tabular}
  \end{center}
\end{table}

From these results, we can see that the speed of our algorithm is faster than others, and its accuracy is almost close to ADMM-t-SVD which can also be seen in Figure \ref{The 30th frame of completion result for a basketball video}. More than that, we will explore the performance of our algorithm with a variety of specified missing ratio from $30\%$ to $90\%$. All algorithms are going to be tested 50 times to reduce contingency. 

\begin{figure}[H]
   	\begin{center}
			\includegraphics[scale=0.45]{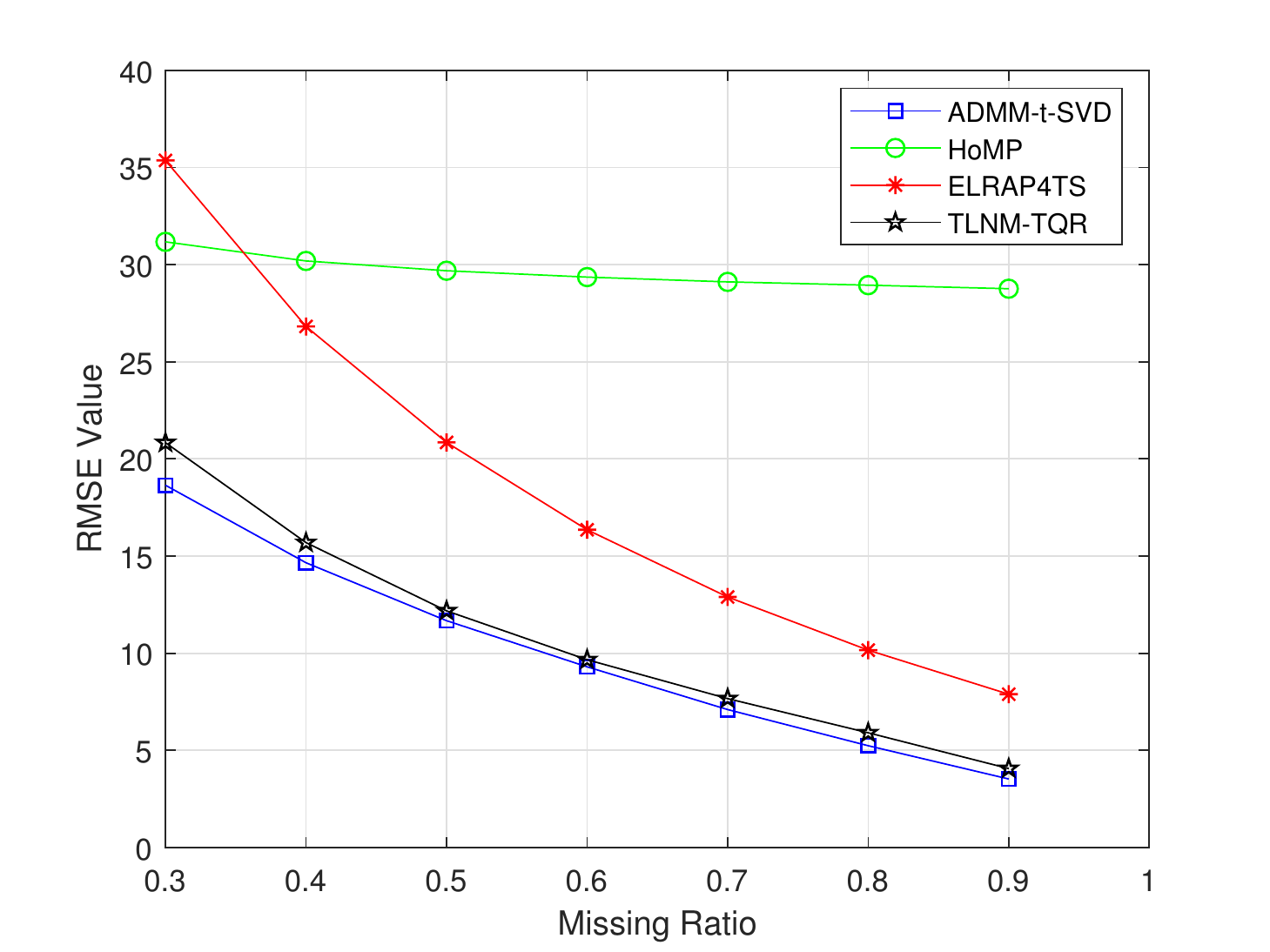}  
			\includegraphics[scale=0.45]{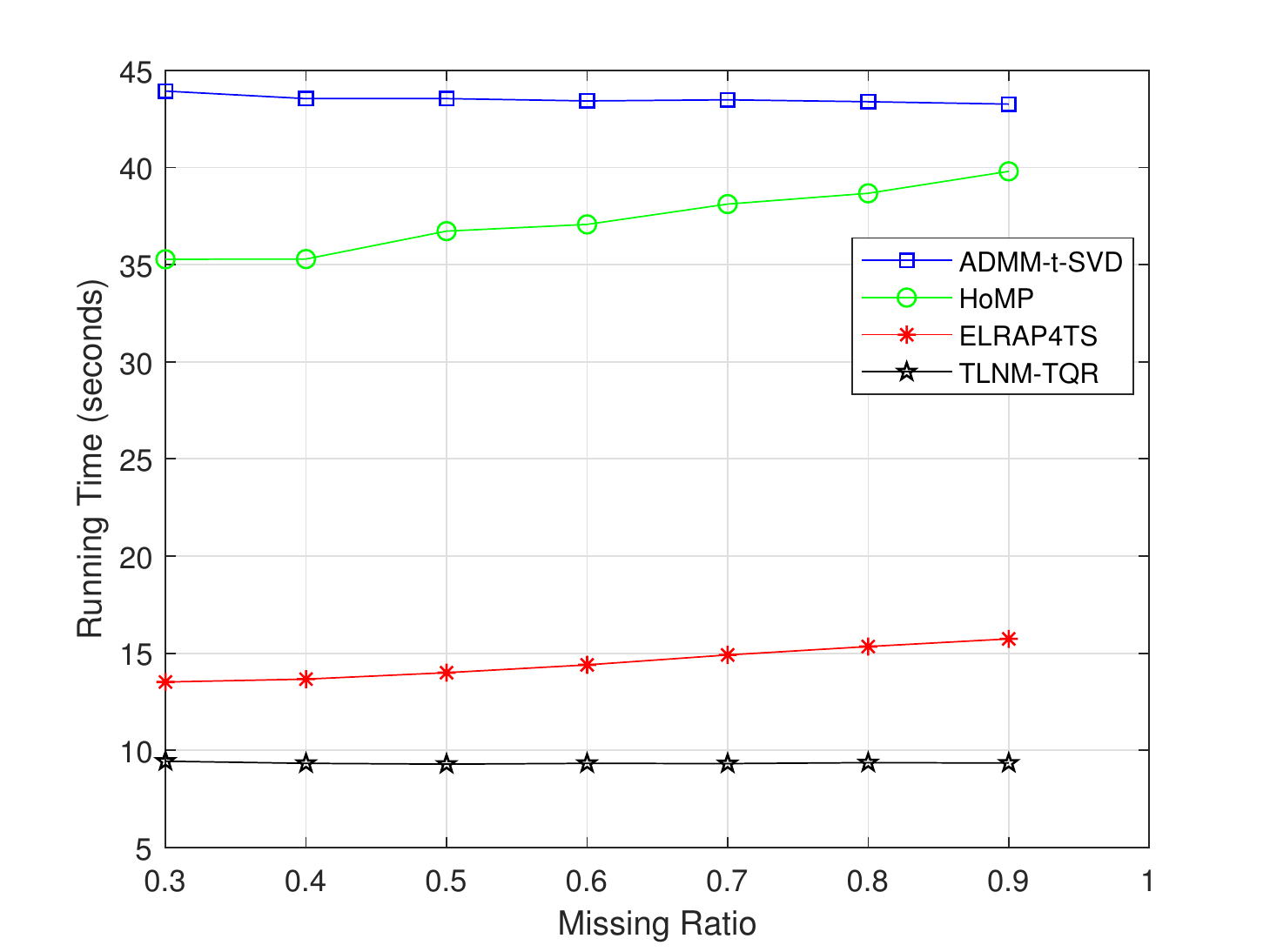}  
	\caption{RMSE Value $\left(\text{left column}\right)$ and Running Time $\left(\text{right column}\right)$ of the four algorithms with different missing ratio.} 
	\label{comparative result of TLNM-TQR}
	\end{center}
\end{figure}

As can be seen from Figure \ref{comparative result of TLNM-TQR}, TLNM-TQR is as precise as ADMM-t-SVD and more precise than others for each specified missing ratio. Besides, TLNM-TQR is also the fast method and is very steady on the speed side. The reason is that the iteration of the algorithm is reduced by CTSVD-QR, a fast and stable decomposition, in solving the Problem (\ref{17}). Furthermore, tensor $L_{2,1}$-norm helps us carry out optimized work more easily.

\begin{figure}[H]
	\centering
			\includegraphics[scale=0.45]{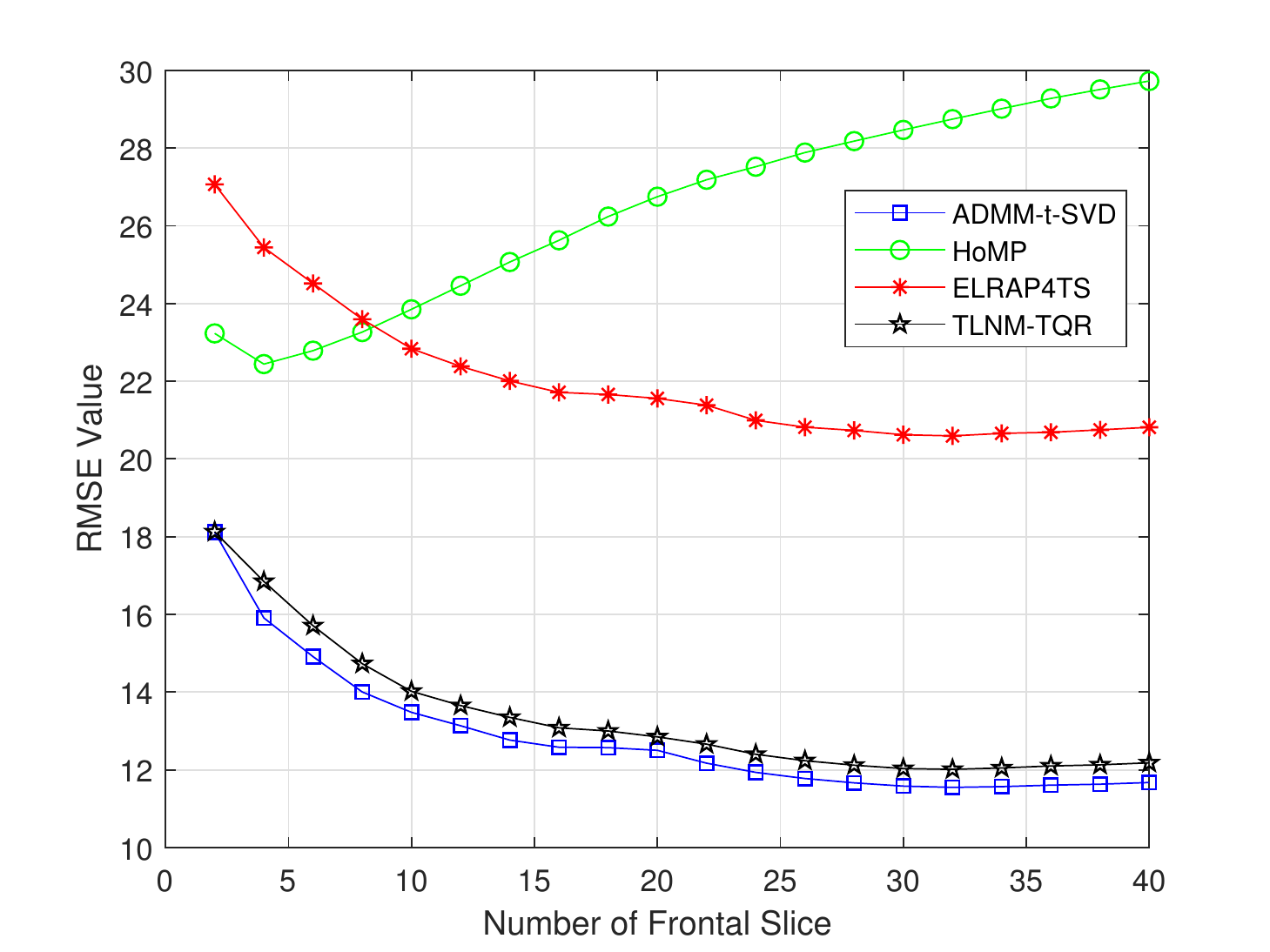}  
			\includegraphics[scale=0.45]{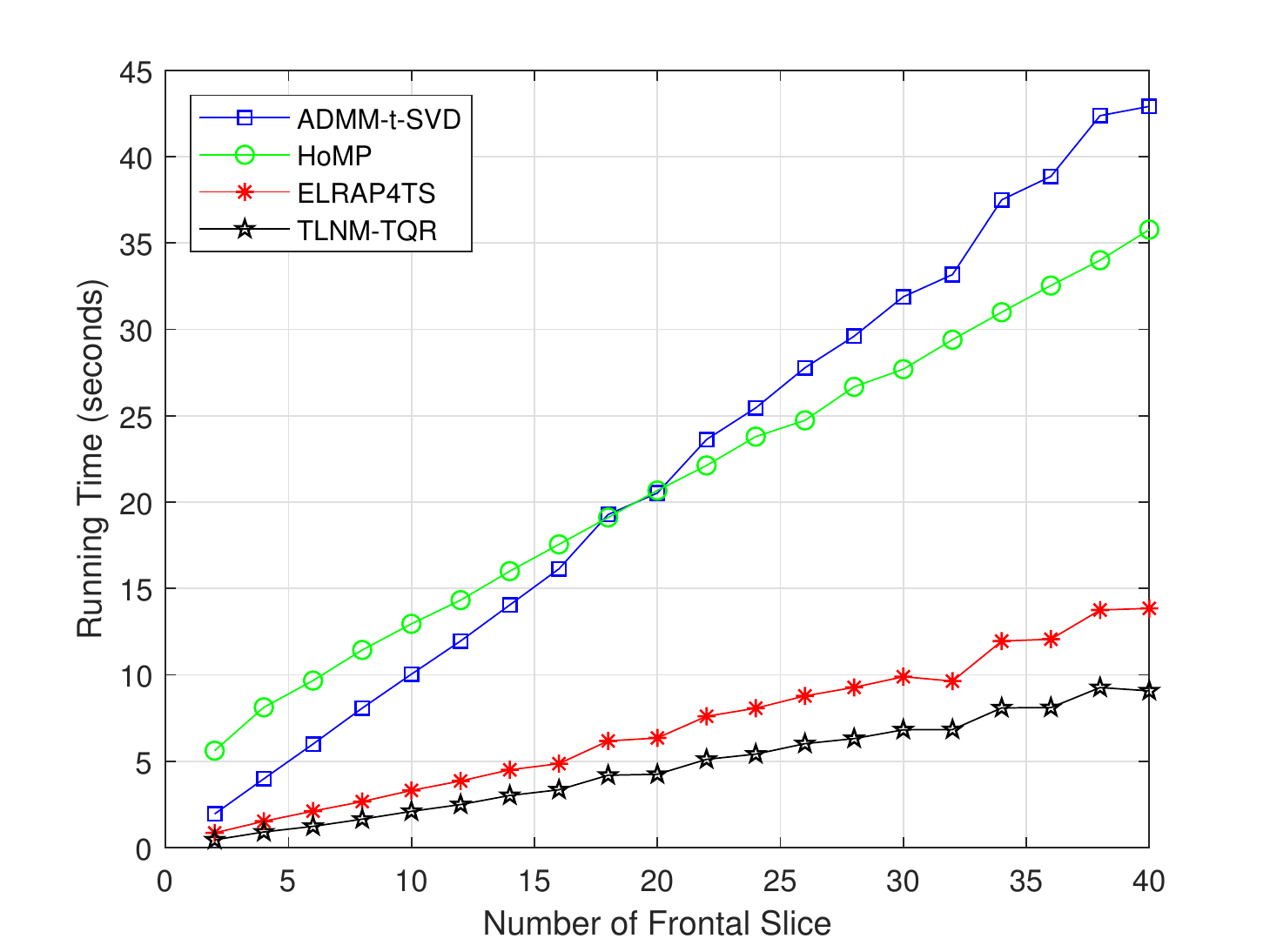}  
	
	\caption{RMSE Value $\left(\text{left column}\right)$ and Running Time $\left(\text{right column}\right)$ of the four algorithms with different number of frontal slice.} 
	\label{frontal comparative result of TLNM-TQR}
\end{figure}

The next experiment, we are going to set a variety of frontal slice $\left( \text{video frame} \right)$ number from 2 to 40 to give recovery performance comparison of four algorithms whose experiment setup is the same as the above-mentioned. As shown in Figure \ref{frontal comparative result of TLNM-TQR}, the RMSE result of TLNM-TQR is not only almost close to ADMM-t-SVD but superior to others when the number of frontal slice is increasing ceaselessly. Besides, the running time line chart on the right shows that the rate of rise of TLNM-TQR is lower than others, which means that our algorithm will have a huge advantage while dealing with more complex tensor structures and datas.

\subsubsection{Color Image Recovery}
In this part, we use a $n_{1}\times n_{2}$ sizes color image as a $n_{1}\times n_{2}\times 3$ tensor to test the performance of our algorithm and compare it with other state-of-the-art algorithms with the same parameters as previously set as well. All of the color images are from the Berkeley Segmentation Dataset \cite{MFTM2001} and thier size are $481\times 321$ or $321\times 481$. 

\begin{figure}[H]
	\centering
			\includegraphics[scale=0.45]{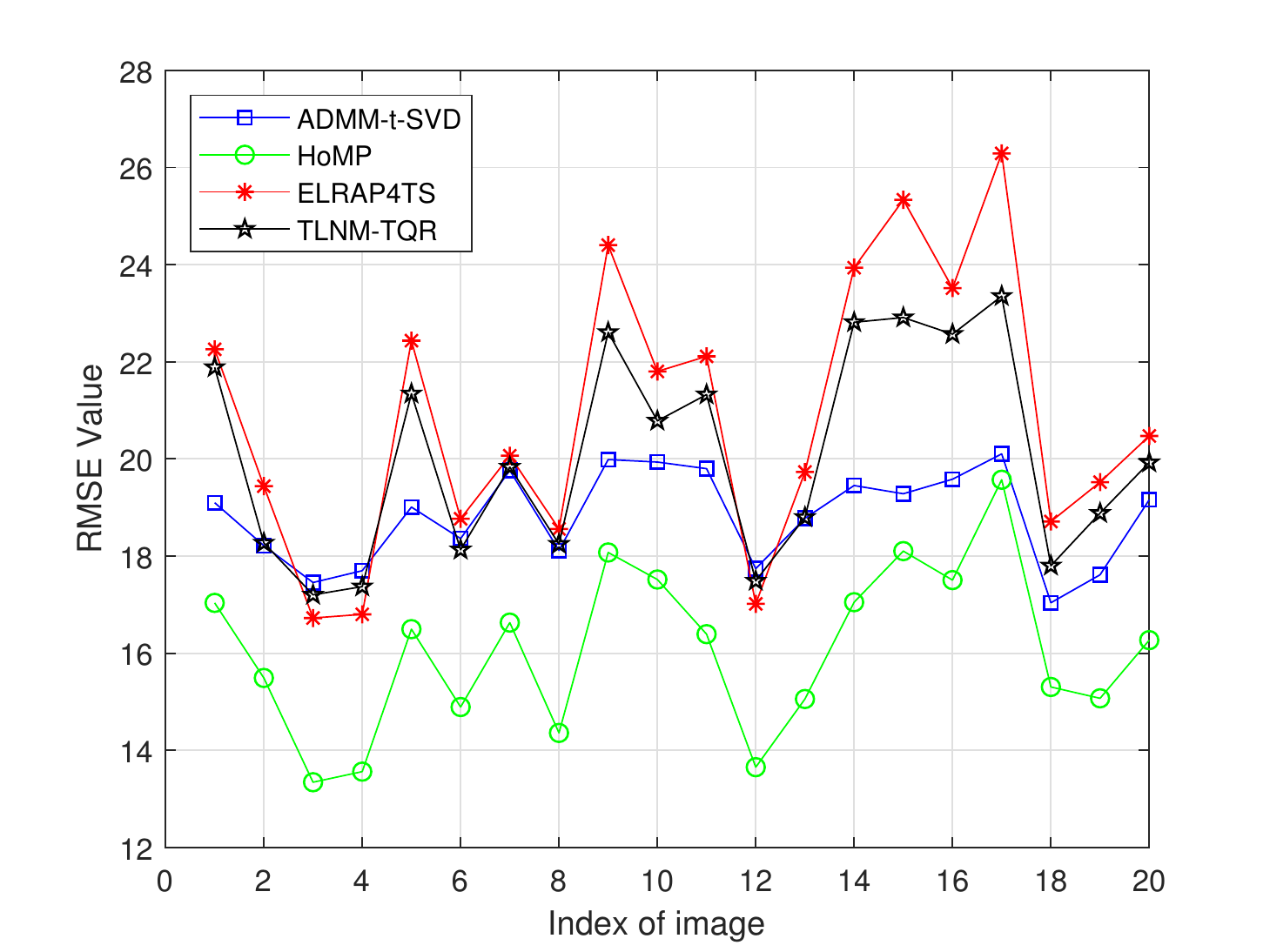}  
			\includegraphics[scale=0.45]{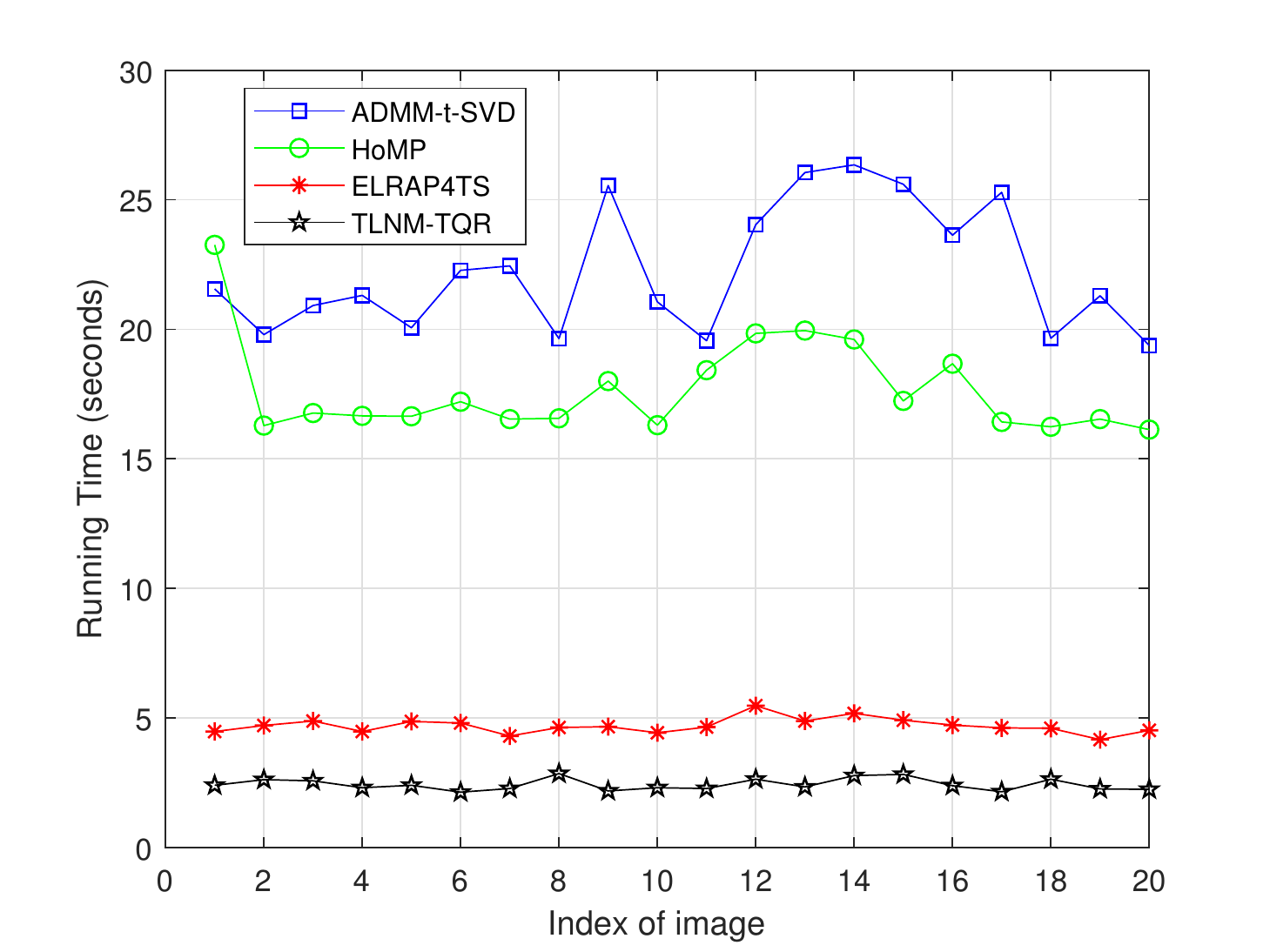}  
	
	\caption{RMSE Value $\left(\text{left column}\right)$ and Running Time $\left(\text{right column}\right)$ of recovery results of four algorithms.} 
	\label{image comparative result of TLNM-TQR}
\end{figure}

To further test the performance of our algorithm, here the Matlab function, imnoise, is used to generate blurring noise to the 20 color images and add Gaussian noise with a mean zero and a standard deviation $\sigma =5e-3$ to make images more difficult to recover. Besides, we maintain $50\%$ of given pixels as known entries in $\Omega$. The results of tests and comparison are shown in Figure \ref{image comparative result of TLNM-TQR}, and some examples of recovred images are shown in Figure \ref{Recovery image show}.

\begin{figure}[H]
	\centering
	\noindent\makebox[\textwidth][c] {
	\includegraphics[scale=0.6]{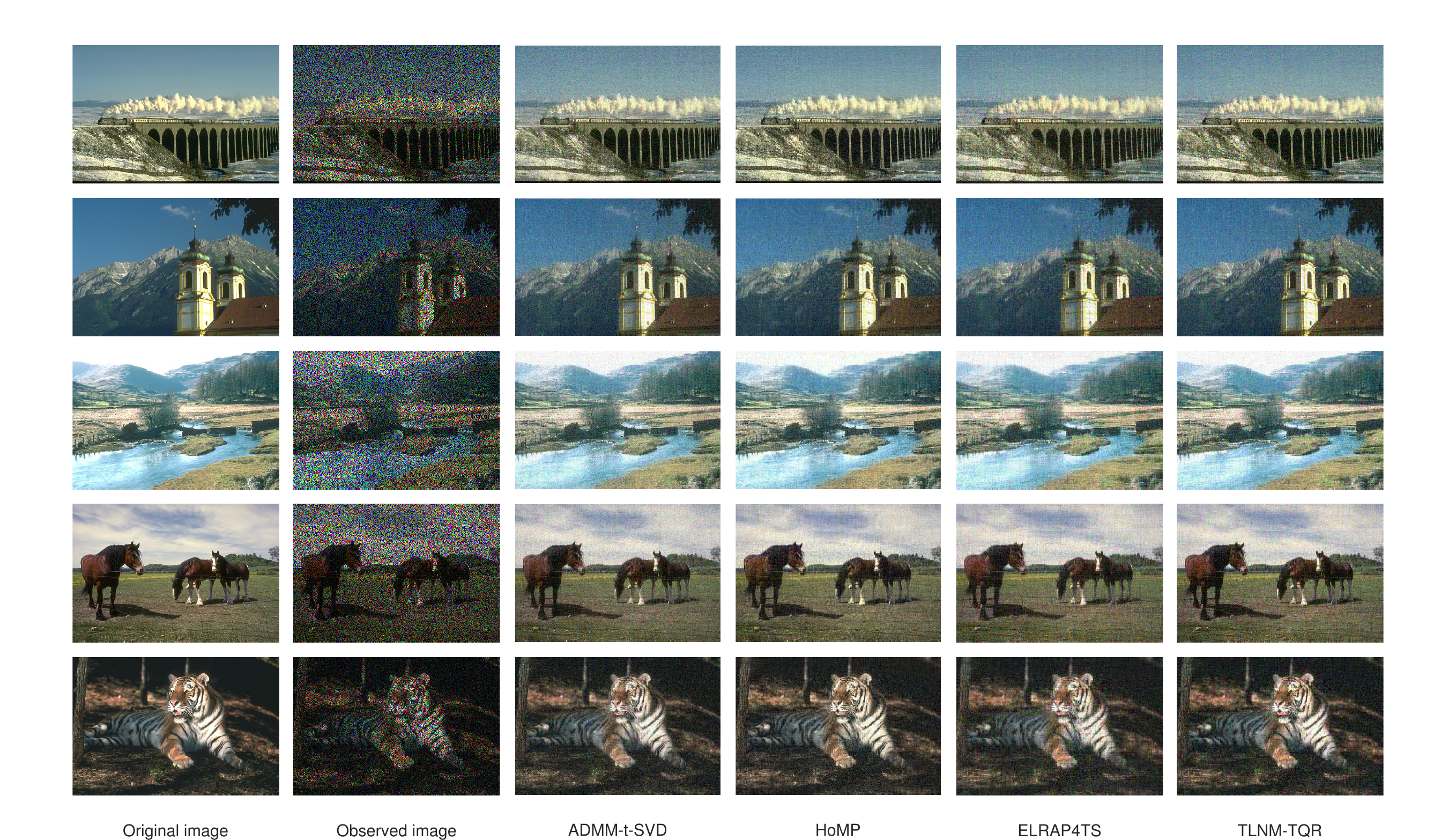} }
	\caption{Recovery performance comparison on 5 example images.}
	\label{Recovery image show}
\end{figure}

From these results and some experimental datas in Table \ref{Comparison results of running time and the RMSE on the 5 images}, TLNM-TQR is still the fastest and the effect of images recovery is satisfactory.

\begin{table}[H]
	\caption{Comparison results of running time and the RMSE on the 5 color images in Figure \ref{Recovery image show}}
	\label{Comparison results of running time and the RMSE on the 5 images}
	\resizebox{\textwidth}{!}
	{
		\begin{tabular}{ccccccccc}
			\toprule
			\multirow{2}{*}{Index} &    \multicolumn{4}{c}{Running Time (seconds)}&\multicolumn{4}{c}{RMSE Value}\\  
			\cmidrule(lr){2-5} \cmidrule(lr){6-9}
			&ADMM-t-SVD&HoMP&ELRAP4TS&TLNM-TQR&ADMM-t-SVD&HoMP&ELRAP4TS&TLNM-TQR \\
			\midrule
			1&22.8229&24.1802&4.7112&\textbf{2.4336}&19.7492&\textbf{17.1329}     & 23.2639      & 22.1667 \\
			2&22.3081&16.6141&5.0232&\textbf{2.3868}&18.1883& \textbf{15.5762}   & 19.6122     & 19.3172 \\
			3&22.1365&16.8169&5.0232&\textbf{2.6052}&20.2465&  \textbf{17.6365}     & 23.2628     & 22.2121 \\
			4&22.4329&17.4253&4.7112&\textbf{2.2308}&19.1752&\textbf{16.1871}&20.9125&20.0297 \\
			5&22.4953&16.8013&5.0388&\textbf{2.3556}&18.7036&\textbf{15.9482}&22.1168&20.8750
			\\
			\bottomrule
		\end{tabular}}
\end{table}

\section{Conclusion}

To explore a fast and precise tensor completion method and inspired by the t-product, we broke though the traditional t-SVD decomposition and proposed a new framework of tensor tri-factorization named CTSVD-QR, which can compute the largest $r$ singular values (tubes) and their associated singular vectors (of tubes) iteratively. Besides, we also abandon the previous norm and propose the tensor $L_{2,1}$-norm to simplify the optimization problem. Then, based on CTSVD-QR and tensor $L_{2,1}$-norm, TLNM-TQR is proposed for tensor completion. For CTSVD-QR and TLNM-TQR, we use synthetic and real-world datas to test its performance. Furthermore, here some state-of-the-art algorithms such as ADMM-t-SVD, HoMP and ELRAP4TS are added to the performance test of TLNM-TQR for comparison. Finally, experimental results show that our method is feasible and convergent. Moreover, the results of video and color images recovery show that TLNM-TQR is much faster than the other algorithms. The experimental results in video recovery still show that TLNM-TQR is almost as precise as the ADMM-t-SVD.


\begin{thebibliography}{50}
	
	\bibitem{GRY2011}
	S. Gandy, B. Recht, and I. Yamada, “Tensor completion and low-n-rank tensor recovery via convex optimization,” Inverse Problems, vol. 27, no. 2,
	p. 025010, 2011.
	
	\bibitem{H2012}
	W. Hackbusch, Tensor Spaces and Numerical Tensor Calculus, vol. 42. 2012.
	
	\bibitem{HLZ2017}
	R. B. Huang, C. Liu, J. L. Zhou,
	“Discriminant analysis via jointly $L_{2,1}$-norm sparse tensor preserving embedding for image classification,”
	Journal of Visual Communication and Image Representation,
	vol. 47, pp. 10-22, 2017.
	
	\bibitem{HMT2011}
	N. Halko, P. G. Martinsson, and J. A. Tropp, “Finding structure
	with randomness: Probabilistic algorithms for constructing approximate
	matrix decompositions,” SIAM Rev., vol. 53, no. 2, pp. 217–288, 2011.
	
	\bibitem{KB2009}
	T. G. Kolda and B. W. Bader, “Tensor decompositions and applications,” SIAM review, vol. 52, no. 3, pp. 455–500, 2009.
	
	\bibitem{KBHH2013}
	M. E. Kilmer, K. Braman, N. Hao, and R. C. Hoover, “Third-order
	tensors as operators on matrices: A theoretical and computational
	framework with applications in imaging,” SIAM J. Matrix Anal.
	Appl., vol. 34, no. 1, pp. 148–172, 2013.
	
	\bibitem{KD2011}
	M. K. Kilmer and M. C. D., “Factorization strategies for third-order tensor,” Linear Algebra and its Application, vol. 435, no. 3, pp. 641–658, 2011.
	
	\bibitem{KS2008}
	T. Kolda and J. Sun, “Scalable tensor decompositions for multi-aspect
	data mining,” in Proc. 8th IEEE Int. Conf. Data Mining, Dec. 2008, pp.
	363–372.
	
	\bibitem{LDYCJH2019}
	Q. Liu, F. Davoine, J. Yang, Y. Cui, Z. Jin and F. Han, “A Fast and Accurate Matrix Completion Method Based on QR Decomposition and  $L_{2,1}$-Norm Minimization,” in IEEE Transactions on Neural Networks and Learning Systems, vol. 30, no. 3, pp. 803-817, March 2019.
	
	\bibitem{LFCLLY2019}
	C. Lu, J. Feng, Y. Chen, W. Liu, Z. Lin, and S. Yan, “Tensor robust principal component analysis with a new tensor nuclear norm,” IEEE Transactions
	on Pattern Analysis and Machine Intelligence, DOI: 10.1109/TPAMI.2019.2891760, 2019.
	
	\bibitem{LFYL2018}
	C. Lu, J. Feng, S. Yan, and Z. Lin, “A unified alternating direction
	method of multipliers by majorization minimization,” IEEE Trans.
	Pattern Anal. Mach. Intell., vol. 40, no. 3, pp. 527–541, Mar. 2018.
	
	\bibitem{LLR1995}
	N. Linial, E. London, and Y. Rabinovich, “The geometry of graphs and
	some of its algorithmic applications,” Combinatorica, vol. 15, no. 2,
	pp. 215–245, 1995.
	
	\bibitem{LLY2010}
	G. Liu, Z. Lin, and Y. Yu, “Robust subspace segmentation by low-
	rank representation,” in Proc. 27th Int. Conf. Mach. Learn., 2010,
	pp. 663–670.
	
	\bibitem{LMWY2013}
	J. Liu, P. Musialski, P. Wonka, and J. Ye, “Tensor completion for estimating missing values in visual data,” IEEE transactions on pattern analysis and
	machine intelligence, vol. 35, no. 1, pp. 208–220, 2013.

	\bibitem{LTYL2016}
	C. Lu, J. Tang, S. Yan and Z. Lin, “Nonconvex Nonsmooth Low Rank Minimization via Iteratively Reweighted Nuclear Norm,” in IEEE Transactions on Image Processing, vol. 25, no. 2, pp. 829-839, Feb. 2016.

	\bibitem{MFTM2001}
	D. Martin, C. Fowlkes, D. Tal, and J. Malik, “A database of human segmented natural images and its application to evaluating segmentation algorithms
	and measuring ecological statistics.,” In Proc. IEEE Int’l conf. Computer Vision, vol. 2, pp. 416–423, 2001.

	\bibitem{NHCD2010}
	F. Nie, H. Huang, X. Cai, and C. Ding, “Efficient and robust feature
	selection via joint $L_{2,1}$-norms minimization,” in Proc. Adv. Neural Inf.
	Process. Syst., 2010, pp. 1813–1821.
	
	\bibitem{SK2008}
	V. D. Silva and L.-H. Kim, “Tensor rank and the ill-posedness of the best
	low-rank approximation problem,” SIAM J. Matrix Anal. Appl., vol. 30,
	no. 3, pp. 1084–1127, 2008.
	
	\bibitem{SPLCLQ2009}
	J. Sun, S. Papadimitriou, C. Lin, N. Cao, S. Liu, and W. Qian, “Multivis: Content-based social network exploration through multi-way visual analysis,”
	in Proc. SIAM Int. Conf. Data Mining, p. 1064-1075, 2009.
	
	\bibitem{X2020}
	A.-B. Xu, “Tensor Completion via a Low-Rank Approximation Pursuit,” arXiv preprint arXiv:2004.08872, 2020.
	
	\bibitem{XWX2020}
	D. X. Xie, H. J. Woerdeman and A.-B. Xu, “Parametrized quasi-soft thresholding operator for compressed sensing and matrix completion,” Computational and Applied Mathematics, vol. 39, no. 2, pp. 11, 2020.
	
	\bibitem{XX2017}
	A.-B. Xu and D. X. Xie, “Low-rank approximation pursuit for matrix completion,” Mechanical Systems and Signal Processing, vol. 95, pp. 77–89, 2017.
	
	\bibitem{YMS2015}
	Y. Yang, S. Mehrkanoon, and J. A. K. Suykens, “Higher order matching pursuit for low rank tensor learning,” arXiv preprint arXiv:1503.02216, 2015.
	
	\bibitem{ZA2017}
	Z. Zhang and S. Aeron, “Exact tensor completion using t-svd,” IEEE Transcations on Signal Processing, vol. 65, no. 6, pp. 1511–1526, 2017.
	
	\bibitem{ZEAHK2014}
	Z. Zhang, G. Ely, S. Aeron, N. Hao, and M. Kilmer, “Novel methods for multilinear data completion and de-noising based on tensor-svd,” in Proc.
	IEEE Conf. Comput. Vis. Pattern Recognit., p. 3842-3849, 2014.
	
	\bibitem{ZJ2016}
	J. Zhang, J. Jiang, “Decomposition-based tensor learning regression for improved
	classification of multimedia,” J. Vis. Commun. Image Represent, vol. 41, pp. 260-271, 2016.
	

\end{thebibliography}

\end{document}